	\let\Hy@linktoc\Hy@linktoc@none
      \newcommand{\href}[2]{#2}
\newcommand{\xxx}[1]{{{\color{red}{\emph{#1}}}}}
\newcommand{\ine}{\operatorname{Ine}}
\newcommand{\ess}{\operatorname{Ess}}
\newcommand{\abs}[1]{\left\lvert{#1}\right\rvert}
\newcommand{\norm}[1]{\left\|{#1}\right\|}
\DeclareMathOperator{\GL}{\rm{GL}}
\DeclareMathOperator{\bd}{\partial}
\DeclareMathOperator{\fix}{\rm{Fix}}
\newcommand{\ol}{\overline}
\renewcommand{\hat}{\widehat}
\newcommand{\til}{\widetilde}
\newcommand{\R}{\mathbb{R}}\newcommand{\N}{\mathbb{N}}
\newcommand{\Z}{\mathbb{Z}}\newcommand{\Q}{\mathbb{Q}}
\newcommand{\T}{\mathbb{T}}
\newcommand{\A}{\mathbb{A}}
\newcommand{\sm}{\setminus}
\newcommand{\id}{\mathrm{Id}}
\newcommand{\deck}{\operatorname{Deck}}
\newcommand{\ie}{i.e.\ }
\newcommand{\eg}{e.g.\ }
\newtheorem{theorem}{Theorem}[section]
\newtheorem{corollary}[theorem]{Corollary}
\newtheorem{lemma}[theorem]{Lemma}
\newtheorem{proposition}[theorem]{Proposition}
\newtheorem{claim}{Claim}
\newtheorem*{theorem*}{Theorem}
\newtheorem{theoremain}{Theorem}
\newtheorem{corollarymain}[theoremain]{Corollary}
\theoremstyle{definition}
\theoremstyle{remark}
\newtheorem{remark}[theorem]{Remark}
\title[A characterization of annularity for area-preserving 
]{A characterization of annularity for area-preserving toral homeomorphisms}
\author{Nancy Guelman}
\address{Nancy Guelman. IMERL, Facultad de Ingenier\'\i a, Universidad de la Rep\'ublica, C.C. 30, Montevideo, Uruguay}
\email{nguelman@fing.edu.uy}
\author{Andres Koropecki}
\address{Andres Koropecki. Universidade Federal Fluminense, Instituto de Matem\'atica e Estat\'\i stica, Rua M\'ario Santos Braga S/N, 24020-140 Niteroi, RJ, Brasil}
\email{ak@id.uff.br}
\author{Fabio Armando Tal}
\address{Fabio Armando Tal. Instituto de Matem\'atica e Estat\'\i stica, Universidade de S\~ao Paulo, Rua do Mat\~ao 1010, Cidade Universit\'aria, 05508-090 S\~ao Paulo, SP, Brazil}
\email{fabiotal@ime.usp.br}
\thanks{The first author was partially supported by CNPq-Brasil. The second author was partially supported by FAPESP and CNPq-Brasil}
\begin{document}

\begin{abstract} We prove that if an area-preserving homeomorphism of the torus in the homotopy class of the identity has a rotation set which is a nondegenerate vertical segment containing the origin, then there exists an essential invariant annulus. In particular, some lift to the universal covering has uniformly bounded displacement in the horizontal direction.
\end{abstract}

\maketitle



\section{Introduction}
Let $\T^2 = \R^2/\Z^2$ denote the two-dimensional torus, and $\pi\colon \R^2\to \Z^2$ the canonical projection. Consider a homeomorphism $f\colon \T^2\to \T^2$ homotopic to the identity, and a lift $\hat{f}\colon \R^2\to \R^2$ of $f$. The \emph{rotation set} $\rho(\hat{f})$ of $\hat{f}$, introduced by Misiurewicz and Ziemian in \cite{m-z} as a generalization of the rotation number of an orientation-preserving circle homeomorphism, is defined as the set of all limits of sequences of the form
\begin{equation} v = \lim_{k\to \infty} (\hat{f}^{n_k}(z_k)-z_k)/n_k,\end{equation}
where $(n_k)_{k\in \N}$ is a sequence of integers such that $n_k\to \infty$ as $k\to \infty$.
 Roughly speaking, this set measures the average asymptotic rotation of orbits. It is known that $\rho(\hat{f})$ is always compact and convex, and if $v\in \rho(\hat{f})$, is extremal or interior in $\rho(\hat{f})$, then there is $z\in \R^2$ such that the pointwise rotation vector
\begin{equation}\rho(\hat{f},z)=\label{eq:lim} \lim_{n\to \infty} (\hat{f}^n(z)-z)/n \end{equation}
exists and coincides with $v$.

In this article, we consider area-preserving homeomorphisms, or more generally homeomorphisms preserving a Borel probability measure of full support. The question that we address is how the unbounded behavior of orbits along a given direction in the universal covering affects the rotation set. In principle, the rotation set does not distinguish fixed points from orbits that become unbounded at a sublinear rate; but does this kind of phenomenon exist?

%
%

An example given in \cite{kt-example} shows that the answer is positive in general: there is an ergodic area-preserving $C^\infty$ diffeomorphism with a lift whose rotation set is $\{(0,0)\}$, but such that almost every point has an unbounded orbit \emph{in every direction} (moreover, the orbit of almost every point visits every fundamental domain in the universal covering). However, in \cite{kt-pseudo} it is shown that this type of example is very particular, as it forces the existence of a large ``essential'' continuum of fixed points.
There are also examples where the rotation set consists of a single vector $w\notin \Q^2$, but there are orbits which are unbounded in the direction perpendicular to $w$ \cite{jager-bmm,kk-minimal}.

When the rotation set is not a single point, the situation turns out to be rather different. This is the case addressed in this article. The general idea behind our main result is illustrated by the theory of (area-preserving) twist maps. For twist maps of the cylinder $\T^1\times \R$, Mather showed in \cite{mather-twist} that either there exists a ``barrier'' (in this setting, an essential invariant circle) which impedes the drift of orbits in the vertical direction, or there is an orbit which converges towards infinity (moreover, the orbit converges toward the upper end in the future and the lower end in the past, or vice-versa).
For twist maps on $\T^2$, the later result was improved in \cite{sli-twist}: the non-existence of a ``barrier'' implies the existence of orbits with nonzero average speed of rotation in the direction perpendicular to the direction of twist (i.e. with nonzero \emph{shear} rotation number).

The result from \cite{sli-twist} was generalized in \cite{tal-dehn}, where the technical and restrictive twist condition was replaced  by the condition that the homeomorphism of $\T^2$ be in a Dehn homotopy class (which implies a weak type of topological twist). 

Our main result considers the case of area-preserving homeomorphisms of $\T^2$ homotopic to the identity. The twist condition is replaced by a weak condition on the existence of orbits with different average rotation speed (i.e. rotation vectors).

To be more precise, let us say that $\hat{f}$ has bounded displacement in the direction $v\in \R^2$ at a point $z\in \R^2$ if there exists $M>0$ such that
$$\abs{\smash{p_v(\hat{f}^n(z)-z)}}\leq M, \text{ for all } n\in \Z,$$
where $p_v(x) = \langle x;v/\norm{v}\rangle$ is the projection onto the direction $v$.
If the number $M$ can be chosen independent of $z\in \R^2$, then we say that $f$ has uniformly bounded displacement in the direction $v$. 

By an essential annulus we mean a subset $A$ of $\T^2$ homeomorphic to an open annulus and containing a homotopically nontrivial loop $\gamma$ of $\T^2$. The homological direction of $A$ is the direction of the line $\R v$, where $v\in \Z^2\simeq H_1(\T^2,\Z)$ represents the homology class of $\gamma$. In particular, a vertical annulus is one with homological direction $\{0\}\times \R$. The main result of this article is the following:

\begin{theoremain}\label{th:main} Let $f\colon \T^2\to \T^2$ be a homeomorphism homotopic to the identity preserving a Borel probability measure of full support, and suppose that some lift $\hat{f}\colon \R^2\to \R^2$ has a rotation set $\rho(\hat{f})=\{0\}\times [a,b]$, where $a<b$. Then $f$ has uniformly bounded displacement in the horizontal direction. Moreover, there exists an essential invariant annulus (which is necessarily vertical).
\end{theoremain}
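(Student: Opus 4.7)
The hypothesis $\rho(\hat f)=\{0\}\times[a,b]$ is equivalent to requiring that every $f$-invariant Borel probability measure $\mu$ satisfies $\int\phi\,d\mu=0$, where $\phi(z):=p_1(\hat f(\til z)-\til z)$ is the horizontal displacement cocycle (well-defined on $\T^2$ because $\hat f-\id$ is $\Z^2$-periodic). Birkhoff's theorem then yields $\frac1n\sum_{k=0}^{n-1}\phi\circ f^k\to 0$ $\mu$-almost surely, but this soft information is far from the uniform bound we need. The plan is to produce an $f$-invariant closed essential set $K\subset\T^2$ all of whose connected components, when lifted to $\R^2$, have uniformly bounded horizontal diameter; then to extract from its complement an $f$-invariant essential vertical annulus $A$; and finally to use invariance of the lift of $A$ in $\R^2$ to obtain the uniform horizontal bound.

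To construct $K$, I would exploit the nondegeneracy $a<b$: by Misiurewicz--Ziemian there exist $z_a,z_b\in\R^2$ with pointwise rotation vectors $(0,a)$ and $(0,b)$, whose lifted orbits drift linearly in opposite vertical directions while having zero mean horizontal displacement. Together with Poincaré recurrence for the full-support invariant measure, this suggests that the candidate set $K_M:=\{z\in\T^2:\sup_{n\in\Z}|p_1(\hat f^n(\til z)-\til z)|\leq M\}$ becomes essential for $M$ large. Making this rigorous is the technical heart of the argument: one combines area preservation, Le Calvez's equivariant transverse foliation theory for homeomorphisms isotopic to the identity, and the simultaneous presence of orbits realizing both endpoints $(0,a)$ and $(0,b)$ of the rotation segment to exclude horizontally wandering orbits, ultimately producing a closed $f$-invariant essential set $K$ with lifted components of uniformly bounded horizontal diameter.

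Granting $K$, each connected component of $\T^2\sm K$ is either inessential or an essential annulus of vertical homological direction, because the lifted components of $K$ are horizontally bounded. Area preservation and essentiality force only finitely many components of each nontrivial modulus class; since $f$ is homotopic to the identity, it preserves the cyclic order of parallel essential vertical annuli in the annular cover and hence fixes each such component, producing an $f$-invariant essential vertical annulus $A$. Lifting $A$ and $\T^2\sm A$ to $\R^2$ yields a disjoint family of vertical strips of uniformly bounded width; the vanishing of the horizontal component of every rotation vector forbids $\hat f$ from permuting these strips (a horizontal permutation would produce a nonzero horizontal rotation), so it preserves each strip setwise. This immediately yields the uniform bound $|p_1(\hat f^n(\til z)-\til z)|\leq M$ for every $z\in\R^2$ and every $n\in\Z$. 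The main obstacle is thus the middle step---passing from vanishing Birkhoff averages to a hard topological obstruction---which is where the vertical segment shape and the nondegeneracy $a<b$ are used in essential ways.
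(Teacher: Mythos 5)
Your proposal correctly identifies where the real difficulty lies, but it does not actually cross that gap. The step you flag as ``the technical heart''---showing that the closed invariant set $K_M$ of points with horizontal displacement bounded by $M$ becomes essential for $M$ large, or more precisely that some such set separates the plane into vertical strips---is left entirely as a black box, with a gesture toward Le~Calvez foliations and the simultaneous realization of orbits with rotation vectors $(0,a)$ and $(0,b)$. Until that step is carried out, nothing downstream in your argument can be assessed, since the annulus extraction and the final uniform bound both presuppose it. This is not a proof but a plan whose hardest component is absent.

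Two further issues. First, your logical ordering (construct $K$, extract the annulus, then deduce the uniform horizontal bound from invariance of the lifted strips) is essentially circular: if you already know $K$ is essential with uniformly horizontally bounded lifted components, you have the uniform bound without ever mentioning an annulus. The paper takes the reverse order---it first proves the uniform bound (Proposition~\ref{pro:main-A}), by contradiction, and only afterward builds the annulus (Theorem~\ref{th:annulus}), and this order is not arbitrary: the annulus construction genuinely uses the bound. Second, your claim that ``since $f$ is homotopic to the identity it preserves the cyclic order of parallel essential vertical annuli and hence fixes each such component'' is not correct as stated; a priori one only gets a periodic essential annulus, and ruling out a nontrivial period requires an argument using the hypothesis $P_1(\rho(\hat f))=\{0\}$ together with a lifting argument in the vertical cylinder cover (this is precisely Lemma~\ref{lem:annulus} in the paper). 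Finally, the paper's actual mechanism for the hard step is quite different from what you propose: it builds the ``barrier'' sets $\omega_{\pm}$ from iterates of half-planes, shows their projections intersect, locates the periodic points realizing $(0,-1)$, $(0,0)$, $(0,1)$ inside $\omega_+$, passes to a horizontal cylinder, and derives a contradiction from the prime ends rotation number result of \cite{KLN} (Corollary~\ref{coro:kln}). Whether your proposed Le~Calvez--foliation route could be made to replace that machinery is an interesting question, but as written it is not a proof.
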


A similar result was obtained by D\'avalos in \cite{davalos} under the assumption that there is a rational point in the rotation set which is not realized by a periodic orbit; however this hypothesis is disjoint from ours, since the existence of an invariant measure of full support already guarantees the realization of all rational points by periodic orbits (see Theorem \ref{th:reali}).
In \cite{tal-renato}, a vesion of Theorem \ref{th:main} was obtained under strong additional hypotheses (namely that the measure of full support be ergodic and with a non-rational rotation vector). 

When $f$ has some periodic essential annulus and $\rho(\hat{f})$ is not a single point, it is easy to verify that $\rho(\hat{f})$ is a segment of rational slope (parallel to the homological direction of the periodic annulus) containing some point of rational coordinates. As a consequence of Theorem \ref{th:main}, the converse of this fact holds:
 
\begin{theoremain}\label{th:main-gen} Let $f\colon \T^2\to \T^2$ be a homeomorphism homotopic to the identity preserving a Borel probability measure of full support, and $\hat{f}$ a lift of $f$ such that $\rho(\hat{f})$ has more than one point. Then $f$ has a periodic essential annulus if and only if $\rho(\hat{f})$ is a segment of rational slope containing some point of rational coordinates (and in this case the homological direction of the annulus is parallel to the rotation segment).
\end{theoremain}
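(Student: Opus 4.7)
The plan is to reduce both implications to Theorem~\ref{th:main} via an $\SL(2,\Z)$-conjugation and a change of lift. For $M \in \SL(2,\Z)$, write $\phi_M$ for the induced homeomorphism of $\T^2$; then $g := \phi_M \circ f \circ \phi_M^{-1}$ has canonical lift $\hat g := M \circ \hat f \circ M^{-1}$, and $\rho(\hat g) = M\rho(\hat f)$.

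\emph{Necessity.} Assume $f^k(A) = A$ for an essential annulus $A$ with primitive homological direction $v \in \Z^2$. Choose $M \in \SL(2,\Z)$ with $Mv = (0,1)$ and conjugate by $\phi_M$ to reduce to $A$ vertical. Fix a connected component $\til A$ of $\pi^{-1}(A)$: it is $\Z(0,1)$-invariant, and because its horizontal translates $\til A + (n,0)$, $n \in \Z$, are pairwise disjoint and exhaust $\pi^{-1}(A)$, the horizontal projection of $\til A$ is an interval of length at most $1$. Let $F$ be the lift of $f^k$ preserving $\til A$; commutativity of $F$ with $\Z^2$-translations forces $F(\til A + (n,0)) = \til A + (n,0)$ for every $n$. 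Since $F$ also preserves the complement $\R^2 \setminus \pi^{-1}(A)$ and its components are linearly ordered along the $x$-axis between consecutive translates $\til A + (n,0)$, after passing to a power $F^\ell$ that fixes each such complement component in a fundamental domain every $F^\ell$-orbit is trapped in a vertical strip of width at most $1$. Hence $\rho(F^\ell) \subset \{0\} \times \R$, and therefore $\rho(F) \subset \{0\} \times \R$. Writing $F = \hat f^k + w$ with $w = (w_1,w_2) \in \Z^2$ yields $\rho(\hat f) \subset \{-w_1/k\} \times \R$, a vertical line through the rational point $(-w_1/k, 0)$. Since $\rho(\hat f)$ is by hypothesis a nondegenerate subsegment of this line, density of $\Q$ produces rational points on it.

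\emph{Sufficiency.} Suppose $\rho(\hat f)$ is a nondegenerate segment of rational slope through a rational point $q \in \Q^2$, parallel to a primitive $v \in \Z^2$. Choose $M \in \SL(2,\Z)$ with $Mv = (0,1)$ and pass to $g = \phi_M f \phi_M^{-1}$ with lift $\hat g = M \hat f M^{-1}$; then $\rho(\hat g) = M\rho(\hat f)$ is a nondegenerate vertical segment containing $Mq \in \Q^2$. Fix $N \in \N$ with $NMq \in \Z^2$ and set $F := \hat g^N - NMq$, a lift of $g^N$. Then
\begin{equation*}
\rho(F) = N\rho(\hat g) - NMq = \{0\} \times [a,b], \qquad a < b,\quad 0 \in [a,b].
\end{equation*}
Since $g^N$ inherits from $f$ (via pushforward by $\phi_M$) a Borel probability measure of full support, Theorem~\ref{th:main} applied to $(g^N, F)$ produces a $g^N$-invariant essential vertical annulus $B$. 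Then $\phi_M^{-1}(B)$ is an $f^N$-invariant essential annulus with homological direction $M^{-1}(0,1) = v$ parallel to the rotation segment; this is a periodic essential annulus for $f$, completing the proof.

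\emph{Main obstacle.} All the heavy lifting is done by Theorem~\ref{th:main}, used here as a black box; the present deduction is a routine bookkeeping exercise with lifts and $\SL(2,\Z)$-conjugations. The only genuinely subtle point is the propagation of $F$-invariance across complement components in the necessity direction, which is dealt with by passing to a suitable power of $F$ and invoking the linear order of the strip decomposition of $\R^2$ together with commutativity with integer translations.
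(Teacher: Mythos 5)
Your sufficiency direction is essentially the paper's own argument: conjugate by an integral matrix to make the rotation segment vertical, pass to a power and translate the lift so that $\{0\}\times[a,b]$ with $a<0<b$ (or at least $0\in[a,b]$ with $a<b$) is the rotation set, apply Theorem~\ref{th:main}, and pull the resulting vertical invariant annulus back through the conjugation. That part is correct.

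For the necessity direction the paper simply invokes items (2) and (3) of Proposition~1.4 of \cite{kt-ess}, which already state that the existence of a periodic essential annulus forces some lift of $f^k$ to have rotation set contained in $\R v$. You instead try to derive this directly from the strip structure of $\pi^{-1}(A)$, which is a legitimate and more self-contained route, but your key intermediate claim is false as stated. From the mere fact that the translates $\til A+(n,0)$ are pairwise disjoint you infer that the horizontal projection of $\til A$ is an interval of length at most $1$; this does not follow. Two disjoint open sets can easily have overlapping (indeed identical) horizontal projections, and a lifted vertical annulus can snake horizontally over a distance much larger than $1$ while its integer translates remain disjoint. What is true, and what your argument actually needs, is only that the horizontal projection of $\til A$ is \emph{bounded}. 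This requires a separate argument, e.g.\ pick an essential loop $\gamma\subset A$ of homology class $(0,1)$ with lift $\hat\gamma\subset\til A$; then $\hat\gamma+(1,0)$ and $\hat\gamma-(1,0)$ lie outside $\til A$ and separate the plane, so the connected set $\til A$ is trapped between them, giving a finite (but not necessarily $\leq 1$) horizontal width. Once you have bounded width for $\til A$ (and hence for the region between $\til A+(n,0)$ and $\til A+(n+1,0)$), the conclusion $\rho(F)\subset\{0\}\times\R$ follows without the step of ``passing to a power $F^\ell$ that fixes each complement component''---a step that is also problematic, since $\T^2\setminus A$ can have infinitely many connected components and $F$ may permute the corresponding lifted regions without finite order. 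You should drop that step entirely: $F$ preserves each strip $\til A+(n,0)$ and therefore each $F$-orbit that starts between consecutive strips stays between them, which already bounds horizontal displacement uniformly. With those two repairs (replacing ``length $\leq 1$'' by ``finite width'' via the separating-loop argument, and eliminating the appeal to a finite power fixing complement components) your necessity argument is sound, and it is a genuinely more elementary alternative to the paper's citation of \cite{kt-ess}.
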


Note that Theorem \ref{th:main} says that the dynamics of $f$ is essentially ``annular''; the horizontal homological direction of $\T^2$ plays no role in the dynamics. 
In particular, Theorem \ref{th:main} allows to translate existing results about rotation sets for homeomorphisms of  the annulus (\eg \cite{handel-annulus}) to this setting

%
%
%

A simple reformulation of Theorem \ref{th:main-gen} also leads to the following
\begin{corollarymain}\label{th:main-disp}  Let $f\colon \T^2\to \T^2$ be a homeomorphism homotopic to the identity, preserving a Borel probability measure of full support, and $\hat{f}\colon \R^2\to \R^2$ a lift of $f$. Suppose that $\rho(\hat{f})$ is not a single point, and for some nonzero $v\in \Z^2$,
$$\sup_{z\in \R^2, n\in \Z} \abs{\smash{p_v(\hat{f}^n(z)-z)}} = \infty$$
then $p_v(\rho(\hat{f})) \neq \{0\}$.
\end{corollarymain}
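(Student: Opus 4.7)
The plan is to prove the contrapositive and reduce to Theorem \ref{th:main} through a linear change of coordinates in $\SL(2,\Z)$. Assume $\rho(\hat f)$ is not a single point and $p_v(\rho(\hat f)) = \{0\}$, so that $\rho(\hat f)$ is a nondegenerate segment contained in the line $v^\perp$; the goal is to derive uniformly bounded $v$-displacement, contradicting the hypothesis.

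First, since $p_v$ depends only on the direction of $v$, we may replace $v$ by its primitive part and assume $v = (v_1,v_2)$ is primitive. By B\'ezout we choose $c,d \in \Z$ with $v_1 d - v_2 c = 1$ and form
\[
B := \begin{pmatrix} v_1 & v_2 \\ c & d \end{pmatrix} \in \SL(2,\Z).
\]
The key algebraic fact is that $B$ transports $v^\perp$ onto $\{0\}\times \R$: for $u \in v^\perp$, $(Bu)_1 = v\cdot u = 0$.

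Next I conjugate. Let $\bar B\colon \T^2 \to \T^2$ be the toral homeomorphism induced by $B$, and set $g := \bar B \circ f \circ \bar B^{-1}$. Then $g$ is homotopic to the identity, preserves the full-support probability $\bar B_* \mu$, and admits the lift $\hat g := B\hat f B^{-1}$. From $\hat g^n(z) - z = B(\hat f^n(B^{-1}z) - B^{-1}z)$ one reads off $\rho(\hat g) = B\rho(\hat f) \subseteq \{0\}\times\R$, and since $B$ is invertible and $\rho(\hat f)$ is nondegenerate, $\rho(\hat g) = \{0\}\times[a,b]$ for some $a<b$. Theorem \ref{th:main} then yields $M>0$ with $|p_{e_1}(\hat g^n(z)-z)| \leq M$ for all $n\in\Z$, $z\in\R^2$.

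Finally, I translate the bound back using $B^T e_1 = v$:
\[
p_{e_1}(\hat g^n(z)-z) = \bigl\langle B(\hat f^n(B^{-1}z)-B^{-1}z),\,e_1\bigr\rangle = \bigl\langle \hat f^n(B^{-1}z)-B^{-1}z,\,v\bigr\rangle = \norm{v}\,p_v(\hat f^n(B^{-1}z)-B^{-1}z).
\]
Substituting $w = B^{-1}z$ (a bijection of $\R^2$) gives $|p_v(\hat f^n(w)-w)| \leq M/\norm{v}$ uniformly in $n$ and $w$, contradicting the hypothesis. There is no real obstacle: the entire content sits in Theorem \ref{th:main}, and the corollary is a bookkeeping check that $\SL(2,\Z)$-conjugation simultaneously sends $v^\perp$ to the vertical axis (so the rotation-set hypothesis converts correctly) and the horizontal projection of $\hat g$ to a multiple of the $v$-projection of $\hat f$ (so the conclusion transfers back).
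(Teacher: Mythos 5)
Your proof is correct and takes essentially the same route the paper has in mind: reduce to the vertical case by a $\GL(2,\Z)$ conjugation and invoke Theorem~\ref{th:main}. The paper phrases Corollary~\ref{th:main-disp} as a ``simple reformulation of Theorem~\ref{th:main-gen}'', but Theorem~\ref{th:main-gen} itself is obtained from Theorem~\ref{th:main} by exactly this conjugation (choosing $A\in\GL(2,\Z)$ to make the rotation segment vertical), so your direct appeal to Theorem~\ref{th:main} is the same idea, if anything slightly more economical since it avoids unwinding the notion of a periodic essential annulus back into a displacement bound.

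One small remark worth making explicit: Theorem~\ref{th:main} as stated requires $\rho(\hat g)=\{0\}\times[a,b]$, not that $0\in[a,b]$, and indeed its proof begins by replacing $\hat g$ with $\hat g^k-(0,l)$ to arrange $\{0\}\times[-1,1]\subset\rho$; so you do not need to normalize the lift further, and your application is legitimate as written.
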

It seems reasonable to expect that, under the hypotheses of the previous theorem, $\sup p_v(\rho(\hat{f})) >0$ whenever $\sup_{z\in \R^2, n\in \Z} p_v(\hat{f}^n(z)-z)>0$. The main difficulty to conclude the latter fact is the following question: suppose that $\rho(\hat{f})$ has nonempty interior and is contained in $\{(x,y):x\leq 0\}$. If the boundary of $\rho(\hat{f})$ contains a vertical segment through the origin, does it follow that the horizontal displacement of orbits is uniformly bounded above? The arguments from the present article fail in this setting mainly due to the fact that the dynamics lifted to a horizontal cylinder (as done in \S\ref{sec:cylinder}) need not be nonwandering under these hypotheses. 

This article is organized as follows. Section 2 introduces some notation, terminology and preliminary results. In particular, it includes a consequence of \cite{KLN} about the possible rotation numbers of periodic points in the boundary of an invariant open topological disk, which is fundamental to obtain the uniformity in Theorem \ref{th:main}.
Section 3 contains the proof of the uniform boundedness of deviations in the horizontal direction under the hypotheses of Theorem \ref{th:main}. In Section 4, a general result about the realization of rational rotation vectors by periodic points in certain invariant continua is given. This is used in Section 5 to prove the existence of an invariant vertical annulus for a nonwandering homeomorphism whenever the horizontal displacement is uniformly bounded and the rotation set is not a point. Finally, in Section 6 the main theorems are proved using the preceding results.

\section{Notation and preliminaries}

The sets $\R_*$, $\R^2_*$, $\Z_*$ and $\Z^2_*$ denote the set of all non-zero elements of the corresponding spaces, \eg $\Z^2_* =\{v\in \Z^2:v\neq (0,0)\}$ and similarly for the other spaces. We denote by $P_1$ and $P_2$ the projections onto the first and second coordinates of $\R^2$, respectively.

Given $u,v\in \R^2$, we denote by $\langle u; v\rangle$ their Euclidean scalar product. The orthogonal projection onto the direction of $v$ is denoted by 
$$p_v(z) = \langle z ; v/\norm{v}\rangle.$$

From now on we assume that $f\colon \T^2\to \T^2$ is a homeomorphism homotopic to the identity.
\subsection{The rotation set}\label{sec:rotation}.

The rotation set, as defined in the introduction, satisfies the following properties (see \cite{m-z} and \cite[Lemma 4.2]{kk-reali})
\begin{enumerate} 
\item $\rho(\hat{f}^n+v) = n\rho(\hat{f})+v$ for any $n\in \Z$ and $v\in \Z^2$;
\item $\rho(A\hat{f}A^{-1})=A\rho(\hat{f})$ for any $A\in \GL(2,\Z)$.
\end{enumerate}
In particular, if $\rho(\hat{f})$ is a segment of rational slope, one may always find $A\in \GL(2,\Z)$ such that $\rho(A\hat{f}A^{-1})$ is a vertical segment (see \cite[Remark 2.5]{kk-reali}). Note that $A\hat{f}A^{-1}$ is a lift to $\R^2$ of a homeomorphism of $\T^2$ homotopic to the identity and conjugate to $\hat{f}$ (via the map $A_{\T^2}$ induced by $A$ on $\T^2$). 

\subsection{Essential and inessential sets}\label{sec:essential}
An open subset $U$ of $\T^2$ is said to be \emph{inessential} if every loop in $U$ is homotopically trivial in $\T^2$; otherwise, $U$ is \emph{essential}. An arbitrary set $E$ is called inessential if it has some inessential neighborhood. We say that $E$ is \emph{fully essential} if $\T^2\sm E$ is inessential.

%
%
%
%


\subsection{The sets $U_\epsilon'(z)$} \label{sec:uepsilon} 
Given $z\in \T^2$ and $\epsilon>0$, denote by $U_\epsilon'(z,f)$ (or simply $U_\epsilon'(z)$ when there is no ambiguity) the connected component of $\bigcup_{n\in \Z} f^n(B_\epsilon(z))$ containing $z$. Suppose that $f^n(B_\epsilon(z))$ intersects $B_\epsilon(z)$ for some $n\in \N$ (otherwise, $U_\epsilon'(z)=B_\epsilon(z)$). Since $f$ permutes the connected components of $\bigcup_{n\in \Z} f^n(B_\epsilon(z))$, it follows that $U_\epsilon'(z) = f^n(U_\epsilon'(z))$, and if $n\in \N$ is chosen minimal with that property, then $f^k(U_\epsilon'(z))$ is disjoint from $U_\epsilon'(z))$ whenever $1\leq k < n$. In particular, if $n>1$ then $U_\epsilon'(z)$ is disjoint from its image.





\subsection{Essential and inessential points} 
Following \cite{kt-ess}, we say that $x\in \T^2$ is inessential if there exists $\epsilon>0$ such that the set $U_\epsilon'(x)$ is inessential. Otherwise, we say that $x$ is essential. The set of inessential points is an open invariant set denoted by $\ine(f)$. Its complement is the set of essential points, denoted $\ess(f)$, which is a closed invariant set.


\subsection{Annular and strictly toral homeomorphisms}
Recall from \cite{kt-ess} that a nonwandering homeomorphism $f$ is called \emph{annular} if there is $v\in \Z^2_*$ and a lift $\hat{f}$ of $f$ such that $\hat{f}$ has uniformly bounded deviations in the direction $v$; and $f$ is called \emph{strictly toral} if $\fix(f^k)$ is not fully essential and $f^k$ is non-annular for each $k\in \N$.  The next result is contained in Theorem A and part (4) of Proposition 1.4 of \cite{kt-ess}.

\begin{theorem}[\cite{kt-ess}]\label{th:strictly} If $f$ is strictly toral, then $\ine(f)$ is inessential, and $\ess(f)$ is fully essential. Furthermore, if $x\in \ess(f)$ then $U_\epsilon'(x)$ is fully essential for any $\epsilon>0$.
\end{theorem}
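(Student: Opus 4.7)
The plan is to prove the first two assertions by contradiction (they are equivalent by the definition of fully essential) and then deduce the third. We argue the contrapositive: assuming $\ine(f)$ is essential, we derive that either $f^k$ is annular for some $k$, or $\fix(f^k)$ is fully essential for some $k$, contradicting the strict toral hypothesis.

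\textbf{Extracting an invariant essential open set.} If $\ine(f)$ is essential, it contains a homotopically nontrivial loop $\gamma$. Every point of $\gamma$ belongs to some inessential $U_\epsilon'(x)$, so by compactness we cover $\gamma$ with finitely many $U_{\epsilon_1}'(x_1),\ldots,U_{\epsilon_k}'(x_k)$. Their union $V$ is then an open essential set. By \S\ref{sec:uepsilon}, each $U_{\epsilon_i}'(x_i)$ is $f^{n_i}$-invariant for some $n_i$, so letting $N=\operatorname{lcm}(n_1,\ldots,n_k)$, $V$ itself is $f^N$-invariant. Since $V$ is essential, some connected component $W$ of $V$ is essential; and since $f^N$ permutes the finitely many components meeting a fixed fundamental domain, a further iterate $f^M$ stabilizes $W$.

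\textbf{Dichotomy on $W$.} Split by whether $W$ is fully essential. If $W$ is essential but not fully essential, then $W$ is contained in a proper essential open annulus $A$ with homological direction $v\in \Z^2_*$. Since $W$ is a union of inessential pieces, its preimage in the universal cover is contained in a union of bounded-width strips transverse to $v$, and the fact that $\hat{f}^M$ permutes the lifts of the inessential pieces consistently with the annular structure of $A$ forces uniformly bounded displacement in the direction orthogonal to $v$; hence $f^M$ is annular, contradicting strict torality. If instead $W$ is fully essential, then $K=\T^2\setminus W$ is a closed $f^M$-invariant set all of whose components are inessential compact sets. After passing to a further iterate so that a particular connected component is individually invariant, one applies the rotation-realization result of \cite{KLN} to the open invariant topological disks complementary to the components of $K$; this produces periodic points whose union, at some iterate $f^{M'}$, becomes fully essential, again contradicting strict torality.

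\textbf{The ``furthermore'' clause and main obstacle.} The third assertion follows by the same Case~A argument applied directly to $W=U_\epsilon'(x)$: if $U_\epsilon'(x)$ were essential without being fully essential, it would be $f^n$-invariant for some $n$ by its very construction, and the Case~A conclusion would force $f^n$ to be annular, contradicting strict torality. The main obstacle I anticipate is making Case~A rigorous: one must show that an $f^M$-invariant essential open set built from inessential pieces sitting inside an annulus yields uniformly bounded deviations in the transversal direction for \emph{every} orbit, not just those in $W$. This requires a careful planar topology analysis of how the lifts of the inessential building blocks interlock in the universal cover, and crucially uses the nonwandering hypothesis to prevent orbits from accumulating mass outside the annular structure that $W$ induces.
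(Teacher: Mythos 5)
This theorem is not proved in the paper: as the text says just above it, "The next result is contained in Theorem~A and part~(4) of Proposition~1.4 of \cite{kt-ess}." There is therefore no in-paper proof to compare against; what follows is an assessment of your attempt on its own terms, and the verdict is that it has genuine gaps.

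The most serious problem is Case~B. You claim that if the $f^M$-invariant set $W$ is fully essential, then periodic points with fully essential union must arise, contradicting strict torality. This is false as stated: take $f$ to be an irrational translation of $\T^2$. Then every point is inessential, $W=\T^2$ is fully essential and invariant, yet $f$ has no periodic points at all. What actually fails strict torality in that example is annularity, not the fixed-point condition. So the dichotomy you set up (Case~A yields annularity, Case~B yields fully essential fixed sets) does not line up with the dichotomy in the definition of strictly toral, and Case~B cannot be repaired by the route you propose. You also misattribute \cite{KLN}: in this paper \cite{KLN} is a prime-ends rotation number theorem (no interior fixed points when the prime-ends rotation number is irrational), not a realization theorem, and it does not manufacture periodic points out of invariant disks.

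In Case~A the gap you yourself flag is real and not small. The fact that $W$ is built from inessential pieces $U'_{\epsilon_i}(x_i)$ does not give bounded lifts: an inessential connected open set need only lift injectively, not boundedly (e.g.\ the projection of a dense irrational line, or an unbounded thin spiral). What you actually have, once you pass to the filling of a component, is that a lift $\hat W$ of $W$ satisfies $\hat W + v = \hat W$ and $\hat W \cap (\hat W + w)=\emptyset$ for $w\in\Z^2\setminus\Z v$; that is periodicity in direction $v$ and disjointness from nontrivial translates, not a bounded strip. Passing from this to uniformly bounded deviations \emph{for all orbits} in the direction transverse to $v$ is exactly the content of results like \cite[Proposition~1.5]{kt-ess}, and you would need to invoke something of that strength rather than assert a strip picture. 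Finally, the actual proof in \cite{kt-ess} does not proceed by this covering-and-case argument at all: it relies on equivariant Brouwer--Le~Calvez foliation theory and a substantial analysis of essential versus inessential dynamics, so even a corrected version of your sketch would be a genuinely different (and at present incomplete) route.
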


We will also need the next result, which is Theorem D of \cite{kt-ess}.
\begin{theorem}[\cite{kt-ess}]\label{th:reali-toral}
 Let $f$ be a strictly toral homeomorphism with a lift $\hat{f}$, and suppose that for some $v\in \Z^2$ and $q\in \N$ there exists $z\in \R^2$ such that $\hat{f}^q(z)=z+v$. Then there exists $z'\in \R^2$ such that $\hat{f}^q(z')=z'+v$ and $\pi(z')\in \ess(f)$.
\end{theorem}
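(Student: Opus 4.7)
The plan is to reduce to the case $\pi(z)\in\ine(f)$ and then, inside a suitably chosen bounded simply connected invariant region, use a prime-ends argument to produce a fixed point of $\hat{f}^q-v$ on the boundary, which lies in $\pi^{-1}(\ess(f))$.

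If $\pi(z)\in\ess(f)$ we may take $z'=z$, so assume $\pi(z)\in\ine(f)$. Let $U$ be the connected component of $\ine(f)$ containing $\pi(z)$, and $\hat U$ the connected component of $\pi^{-1}(U)$ containing $z$. Since $\ine(f)$ is $f$-invariant and $\pi(z)$ is $f^q$-fixed, $f^q(U)=U$. As $U$ is inessential, the induced map $\pi_1(U)\to\pi_1(\T^2)$ is trivial, so the stabilizer of $\hat U$ in $\Z^2$ is trivial and $\pi|_{\hat U}\colon\hat U\to U$ is a homeomorphism. Moreover $\hat U$ is bounded: by Theorem \ref{th:strictly}, $\ess(f)$ is fully essential and hence $\T^2\sm\overline U$ contains essential loops in two homologically independent directions, whose lifts partition $\R^2$ into bounded tiles and, being disjoint from $\hat U$, confine it to a single tile. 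Because $\hat f^q$ permutes the components of $\pi^{-1}(U)$ and sends $z\in\hat U$ to $z+v$, we have $\hat f^q(\hat U)=\hat U+v$, so $g:=\hat f^q-v$ is a planar homeomorphism satisfying $g(\hat U)=\hat U$ and $g(z)=z$.

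Let $\hat W$ denote the topological filling of $\hat U$: the union of $\hat U$ with all bounded connected components of $\R^2\sm\hat U$. Then $\hat W$ is a bounded, open, simply connected subset of $\R^2$ with $\partial\hat W\subset\partial\hat U\subset\pi^{-1}(\ess(f))$. Since $g$ is a planar homeomorphism mapping $\hat U$ to itself, it permutes the complementary components of $\hat U$ and fixes the unique unbounded one; hence $g(\hat W)=\hat W$. By Carath\'eodory's theorem, the prime-ends compactification $\hat W^{\ast}$ is homeomorphic to $\overline{\D}$, and $g$ extends to a homeomorphism of $\hat W^{\ast}$ whose restriction $\tilde g$ to the circle of prime ends $\mathcal E$ is orientation-preserving, with a well-defined rotation number $\tau(\tilde g)\in\R/\Z$.

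The decisive step is showing that $\tau(\tilde g)=0$. Otherwise, the rotational boundary dynamics of $\tilde g$ would propagate inward, producing orbits of $g$ in $\hat U$ with unbounded signed displacement in a direction transverse to $v$; combined with the fact that $f$ is nonwandering and $\hat U$ is bounded (forcing every $g$-orbit in $\hat U$ to be recurrent), this would imply that some iterate of $f$ has uniformly bounded displacement in some direction, i.e.\ is annular---contradicting strict torality. Once $\tau(\tilde g)=0$, $\tilde g$ fixes some prime end $p\in\mathcal E$, whose impression $I(p)\subset\partial\hat W$ is a non-separating continuum invariant under $g$. The Cartwright--Littlewood fixed-point theorem applied to $g$ and $I(p)$ produces a point $z'\in I(p)\subset\pi^{-1}(\ess(f))$ with $g(z')=z'$, i.e.\ $\hat f^q(z')=z'+v$ and $\pi(z')\in\ess(f)$. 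The main obstacle is the vanishing of the prime-end rotation number: this is where strict torality genuinely enters, forbidding boundary rotational behaviour that would be compatible with an interior fixed point.
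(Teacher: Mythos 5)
This statement is quoted in the paper as Theorem~D of \cite{kt-ess} and is not proved there, so there is no in-paper argument to compare yours against; I can only evaluate your proposal on its own terms.

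Your reduction to $\pi(z)\in\ine(f)$, the identification of $g=\hat f^q-v$ as a planar homeomorphism fixing the component $\hat U$ and its filling $\hat W$, and the observation that $\partial\hat W\subset\pi^{-1}(\ess(f))$ are all sound, and the general shape of the argument (prime ends on the boundary, Cartwright--Littlewood) is reasonable. However, there are two genuine gaps. First, the boundedness of $\hat U$ is not established by what you write: ``$\ess(f)$ is fully essential'' says only that its complement $\ine(f)$ is inessential, and a compact fully essential set need not contain \emph{any} essential loop (a Sierpi\'nski-carpet-type set is a counterexample), so the claim that $\T^2\sm\overline U$ contains two homologically independent essential loops does not follow. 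Nor does ``$U$ inessential, $\pi|_{\hat U}$ injective'' by itself force $\hat U$ to be bounded --- a thin neighbourhood of a properly embedded ray is an unbounded connected open set disjoint from all its nonzero integer translates. Boundedness of the components of $\pi^{-1}(\ine(f))$ under strict torality is a nontrivial input from \cite{kt-ess} and needs to be invoked or proved, not asserted via loops.

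Second, the pivotal step $\tau(\tilde g)=0$ is only sketched, and the sketch does not hold up: a nonzero prime-end rotation number on the boundary of the \emph{bounded} disk $\hat W$ does not visibly produce orbits of $\hat f$ with unbounded displacement transverse to $v$ (all orbits stay in $\hat W$, which is bounded), and the invocation of annularity is not connected to the geometry here. Moreover, the tools you then reach for need more care than the write-up allows: the Cartwright--Littlewood fixed-point theorem applies to non-separating invariant continua, and the impression of a fixed prime end can separate the plane; and to use prime-end machinery in the way Theorem~\ref{th:kln} suggests you would also want $g$ to be nonwandering in $\hat W$, which you have on $\hat U\cong U$ but not obviously on the filled-in components $\hat W\sm\hat U$. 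As it stands the proposal identifies the right objects but leaves the two decisive steps --- boundedness of $\hat U$ and vanishing of the prime-end rotation number --- unproved.
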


\subsection{Realization of rational rotation vectors}

The previous theorem is useful in combination with the following 

\begin{theorem}\label{th:reali} Suppose that $f$ preserves a Borel probability measure of full support on $\T^2$, and let $\hat{f}$ be a lift of $f$ to $\R^2$. If $\rho(\hat{f})$ is an interval and $v/q\in \rho(\hat{f})$ for some $v\in \Z^2$ and $q\in \N$, then there exists $z\in \R^2$ such that $\hat{f}^q(z)=z+v$.
\end{theorem}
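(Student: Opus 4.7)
The plan is to reduce the statement to Franks' classical fixed point theorem for nonwandering torus homeomorphisms; the reduction is a standard manipulation of rotation sets.

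First, I would pass from the sought equation $\hat{f}^q(z)=z+v$ to a fixed point equation for a different map. Consider $\hat{g} = \hat{f}^q - v$, a lift of $g = f^q$ (well-defined as a lift since $v\in \Z^2$). By properties (1)--(2) of the rotation set recalled in \S\ref{sec:rotation},
\[ \rho(\hat{g}) \;=\; q\,\rho(\hat{f}) - v, \]
and the hypothesis $v/q\in \rho(\hat{f})$ gives $0 \in \rho(\hat{g})$. Moreover, $g$ preserves the same Borel probability measure of full support as $f$, and a fixed point of $\hat{g}$ is exactly a $z\in \R^2$ with $\hat{f}^q(z) = z+v$. Thus it suffices to prove: \emph{if $f\colon \T^2\to \T^2$ is a homeomorphism homotopic to the identity preserving a Borel probability measure of full support, and $\hat{f}$ is a lift with $0\in \rho(\hat{f})$, then $\hat{f}$ has a fixed point.}

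This special case is precisely the content of Franks' theorem from ``Recurrence and fixed points of surface homeomorphisms''. Since $f$ preserves a measure of full support, every point of $\T^2$ is nonwandering, so Franks' theorem applies: the absence of fixed points for $\hat{f}$ would force $\rho(\hat{f})$ to be contained in an open halfplane through the origin, contradicting $0\in \rho(\hat{f})$. The interval hypothesis on $\rho(\hat{f})$ plays no role in the argument and reflects only the context of the intended application. Consequently, the only substantive input is Franks' theorem itself, which I would cite as a black box; this is the unique nontrivial step, and reproving it would require the Brouwer-plane-translation-style arguments that convert the absence of fixed points in the universal cover into a uniform drift of orbits incompatible with $0\in \rho(\hat{f})$.
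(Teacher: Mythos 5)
Your reduction to a fixed-point statement for $\hat g = \hat f^q - v$ is correct and is indeed the natural first move. The gap lies in the appeal to Franks. You cite the 1988 paper \emph{Recurrence and fixed points of surface homeomorphisms}, but the reference this paper relies on is \cite{franks-reali-area}, the realization paper. More importantly, the disk-chain machinery of the 1988 paper yields a fixed point for the lift of a nonwandering torus map only when the rotation set has both strictly positive and strictly negative projections onto some integer direction, that is, when $0$ lies in the \emph{relative interior} of the rotation segment. Theorem~\ref{th:reali}, however, allows $v/q$ to be an \emph{endpoint} of $\rho(\hat f)$, so after your reduction $\rho(\hat g)$ may be a segment of the form $\{0\}\times[0,b]$: the origin is an endpoint, every direction has a one-sided projection, no closed disk chain closes up, and the ``no fixed point implies $\rho$ lies in an open halfplane through the origin'' statement you attribute to Franks is not what that paper proves (nor would it follow from Brouwer-plane-translation arguments alone).

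Your remark that ``the interval hypothesis on $\rho(\hat f)$ plays no role'' is therefore precisely backwards. Because the rotation set is one-dimensional, the classical realization theorem for rationals in the two-dimensional interior of $\rho(\hat f)$ --- which does hold with no measure hypothesis --- never applies; one is always in the segment regime, where the endpoint case is unavoidable and is the whole difficulty. Realizing a rational endpoint of the rotation segment is exactly where area-preservation (or the full-support measure) enters essentially: it is the content of Franks' realization theorem in \cite{franks-reali-area}, and the paper additionally invokes \cite{kk-reali} to pass from ``area-preserving'' to ``preserves a Borel probability measure of full support'' via the curve intersection property (and notes that D\'avalos's result would allow ``nonwandering'' alone). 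None of these ingredients appears in your proposal, so the one nontrivial step is the one that is missing.
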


This was proved by Franks for the case of area-preserving homeomorphisms \cite{franks-reali-area}. In \cite{kk-reali} the authors prove a version of the same theorem replacing the preservation of area by the curve intersection property (i.e. the property that any essential loop intersects its image by $f$). The latter version easily implies that Franks' theorem is still valid if one replaces the area-preserving hypothesis by the preservation of any Borel probability measure of full support, as stated here. Moreover, a more recent result of D\'avalos \cite{davalos} implies that the same result is true if $f$ is either nonwandering or non-annular.

\subsection{Nonwandering homeomorphisms}

We will use the following facts (for the first one, see \cite[Remark 4.1]{koro}).
\begin{proposition}\label{pro:nw-power} If $f$ is a nonwandering homeomorphism of a topological space, then so is $f^n$.
\end{proposition}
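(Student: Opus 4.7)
The plan is to prove Proposition \ref{pro:nw-power} by a standard return-time-pigeonhole argument. Fix $n \in \N$ and a nonempty open set $U \subset X$; the goal is to find $k \geq 1$ such that $(f^n)^k(U) \cap U \neq \emptyset$, i.e. $f^{nk}(U) \cap U \neq \emptyset$.

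First I would inductively build a nested family of nonempty open sets $U = U_0 \supset U_1 \supset \cdots \supset U_n$ along with positive integers $k_1, \ldots, k_n$ as follows. Given $U_j$ nonempty open, apply the nonwandering property of $f$ to get some $k_{j+1} \geq 1$ with $f^{k_{j+1}}(U_j) \cap U_j \neq \emptyset$, and set $U_{j+1} = U_j \cap f^{-k_{j+1}}(U_j)$, which is nonempty and open by continuity.

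Next I would observe that for any $x \in U_n$, unwinding the definition of the $U_j$ forces the orbit of $x$ under $f$ to return to $U$ at a prescribed list of times. Indeed, setting $s_0 = 0$ and $s_i = k_n + k_{n-1} + \cdots + k_{n-i+1}$ for $1 \leq i \leq n$, one checks by induction on $i$ that $f^{s_i}(x) \in U_{n-i} \subset U$. This produces $n+1$ distinct nonnegative integers $0 = s_0 < s_1 < \cdots < s_n$ at which the orbit of the single point $x$ visits $U$.

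Finally I would apply pigeonhole: among the $n+1$ integers $s_0, \ldots, s_n$, at least two are congruent modulo $n$, say $s_i \equiv s_j \pmod{n}$ with $i < j$. Then $s_j - s_i = nk$ for some $k \geq 1$, and
\[
f^{nk}(f^{s_i}(x)) = f^{s_j}(x) \in U, \qquad f^{s_i}(x) \in U,
\]
so $f^{s_i}(x) \in U \cap f^{-nk}(U)$, showing $(f^n)^k(U) \cap U \neq \emptyset$. Since $U$ was arbitrary, $f^n$ is nonwandering.

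There is no serious obstacle here; the only point that requires care is verifying at each step of the induction that $U_{j+1}$ is nonempty, which is immediate from the definition of the nonwandering property applied to the open set $U_j$. The argument uses nothing about $X$ beyond being a topological space, which is exactly the generality claimed.
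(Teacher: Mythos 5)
Your proof is correct; the nested-sets construction followed by pigeonhole modulo $n$ is the standard argument for this fact, and each step checks out (in particular $U_{j+1}$ is nonempty precisely because the nonwandering property applied to $U_j$ guarantees $U_j \cap f^{-k_{j+1}}(U_j) \neq \emptyset$, and the indices $s_0 < \cdots < s_n$ give $n+1$ return times to $U$, two of which must agree mod $n$). The paper itself gives no proof of Proposition~\ref{pro:nw-power}, deferring to a remark in a cited reference, so there is no in-text argument to compare against; your reasoning is exactly the expected one.
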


\begin{proposition}\label{pro:nw-lift} Let $f\colon S\to S$ be a nonwandering homeomorphism of a surface, and $\til{f}\colon \til{S}\to \til{S}$ a lift of $f$ by a finite covering $\til{\pi}\colon \til{S}\to S$. Then $\til{f}$ is nonwandering.
\end{proposition}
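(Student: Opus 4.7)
The strategy is to show that every nonempty open $\til U\subset \til S$ contains an $\til f$-recurrent point; since recurrent points are nonwandering and $\Omega(\til f)$ is closed, this will yield $\Omega(\til f)=\til S$.

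First, I would establish that the set $R(f)$ of $f$-recurrent points is dense in $S$. Since $S$ is a surface, hence Polish, this follows from a routine Baire category argument: for each $k,N\geq 1$ the set $\bigcup_{n\geq N}\{x\in S:d(f^n(x),x)<1/k\}$ is open, and the nonwandering hypothesis (applied to any ball of radius less than $1/(2k)$) shows that it is dense. The intersection over $k$ and $N$ is then a dense $G_\delta$, equal to $R(f)$.

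Now let $\til U\subset \til S$ be nonempty open, pick $x\in R(f)\cap\til\pi(\til U)$ and a lift $\til x\in\til U$, and fix $n_k\to\infty$ with $f^{n_k}(x)\to x$. The point $\til f^{n_k}(\til x)$ projects to $f^{n_k}(x)$, so for large $k$ it lies in one of the finitely many sheets of $\til\pi^{-1}$ over a small evenly-covered neighborhood of $x$. Finiteness of the deck transformation group $D$ lets me pass to a subsequence with $\til f^{n_k}(\til x)\to d^*(\til x)$ for a single $d^*\in D$. If $d^*=\id$ we are done. Otherwise let $\ell\geq 2$ be the order of $d^*$; I would then prove by induction on $j\geq 1$ that there exists a sequence $m_k^{(j)}\to\infty$ with $\til f^{m_k^{(j)}}(\til x)\to (d^*)^j(\til x)$. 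The inductive step relies on the commutation of $\til f$ with $D$: for fixed $l$, continuity of the iterate $\til f^{n_l}$ gives
$$\til f^{n_l}\bigl(\til f^{m_k^{(j)}}(\til x)\bigr)\longrightarrow \til f^{n_l}\bigl((d^*)^j(\til x)\bigr)=(d^*)^j\,\til f^{n_l}(\til x)$$
as $k\to\infty$, and the right-hand side tends to $(d^*)^{j+1}(\til x)$ as $l\to\infty$; a standard diagonal extraction then produces $m_k^{(j+1)}$. Taking $j=\ell$ yields $(d^*)^\ell=\id$, so $\til f^{m_k^{(\ell)}}(\til x)\to\til x$ and $\til x$ is $\til f$-recurrent in $\til U$.

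The principal obstacle is the final diagonalization: the iterates $\til f^{n_k}$ are not uniformly continuous in $k$, so one cannot naively compose returns. The decisive ingredient is that the commutation of $\til f$ with the finite deck group $D$ allows the accumulated deck factor to be pulled outside the composition, reducing the argument to continuity of each individual $\til f^{n_l}$; finiteness of the order of $d^*$ then ensures that composing $\ell$ returns cancels the defect and yields genuine recurrence.
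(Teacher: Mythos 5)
Your proof takes a genuinely different route from the paper's, which is a short pigeonhole argument: starting from a nonempty open $U_1\subset\til{S}$ one recursively builds a shrinking chain $U_{i+1}=U_i\cap T_i(\til{f}^{n_i}(U_i))$ using the nonwandering property of $f$ downstairs, then finds $i<j$ with $T_i=T_j$ by pigeonhole on the finite deck group, which immediately gives $\til{f}^{n_j-n_i}(U_1)\cap U_1\neq\emptyset$. Your route through a Baire argument for dense recurrence and then a diagonal extraction is considerably heavier.

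More importantly, your argument has a gap: it rests on the claim that $\til{f}$ commutes with the deck group $D$, both in the identity $\til{f}^{n_l}\circ(d^*)^j=(d^*)^j\circ\til{f}^{n_l}$ and in the closing remark that commutation lets you pull the accumulated deck factor outside the composition. This commutation is not automatic. A lift of $f$ merely normalizes $D$: conjugation $d\mapsto\til{f}\circ d\circ\til{f}^{-1}$ is the automorphism of $D$ induced by $f_*$ on $\pi_1(S)/\pi_1(\til{S})$, and for a general surface homeomorphism this is nontrivial (for instance, $f$ homotopic to a nontrivial element of $\GL(2,\Z)$ on $\T^2$, lifted to a finite characteristic cover). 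The proposition as stated makes no homotopy-to-identity assumption. The gap is repairable: since $D$ is finite the induced automorphism has some finite order $N$, so $\til{f}^N$ does commute with $D$; combining this with Proposition~\ref{pro:nw-power} (so $f^N$ is still nonwandering) and the fact that nonwandering of $\til{f}^N$ implies nonwandering of $\til{f}$, your argument would go through for $\til{f}^N$. But as written the commutation is simply asserted. A smaller issue, shared with the paper's proof, is the implicit assumption that the covering is regular, so that the deck group acts transitively on fibers; you need this already to write the subsequential limit of $\til{f}^{n_k}(\til{x})$ as $d^*(\til{x})$.
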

\begin{proof}
Let $U_1\subset \til{S}$ be a nonempty open set. Since $f$ is nonwandering, there exists $n_1\in \N$ and a deck transformation $T_1\in \deck(\til{\pi})$ such that $U_2:=U_1\cap T_1(\til{f}^{n_1}(U_1)) \neq \emptyset$. The fact that $f$ is nonwandering also implies that there is $n_2>n_1$ and $T_2\in \deck(\til{\pi})$ such that $U_2\cap T_2(\til{f}^{n_2}(U_2))\neq \emptyset$. Repeating this process we obtain recursively an increasing sequence $(n_i)_{i\in \N}$ of integers, a sequence $(T_i)_{i\in \N}$ of Deck transformations, and a decreasing sequence $(U_i)_{i\in \N}$ of nonempty open sets, such that 
$$U_{i+1} = U_i\cap T_i(\til{f}^{n_i}(U_i)).$$
Note that, for any $k\in \N$,
$$U_{k+1}\subset \bigcap_{i=1}^k T_i(\til{f}^{n_i}(U_i)) \subset \bigcap_{i=1}^k T_i(\til{f}^{n_i}(U_1)).$$
Let $k$ be larger than the number of elements of $\deck(\til{\pi})$. Then there exist $1\leq i<j\leq k$ such that $T_i=T_j$. This implies that $\til{f}^{n_i}(U_1)$ intersects $\til{f}^{n_j}(U_1)$, and so $\til{f}^{n_j-n_i}(U_1)\cap U_1\neq \emptyset$. This proves that $\til{f}$ is nonwnandering.
\end{proof}

\subsection{A result on prime ends rotation numbers}

We also need the following result, which is Theorem B of \cite{KLN} (stated in a simplified version). 
\begin{theorem}[\cite{KLN}] 
\label{th:kln}
Let $f\colon \R^2\to \R^2$ be an orientation preserving homeomorphism and $U\subsetneq \R^2$ be an open $f$-invariant topological disk. If $f$ is nonwandering in $U$ and the prime ends rotation number of $f$ in $U$ is not zero (mod $\Z$), then there are no fixed points of $f$ on the boundary of $U$.
Moreover, if $U$ is unbounded, then there are no fixed points of $f$ in the complement of $U$.
\end{theorem}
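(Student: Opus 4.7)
The plan is to work with the prime ends compactification of $U$. Let $\hat{U}$ denote this compactification; it is a closed topological disk whose boundary is a circle $\mc{P}$ of prime ends, and the homeomorphism $f|_U$ extends continuously to a homeomorphism $\hat{f}\colon \hat{U}\to \hat{U}$. The restriction $\hat{f}|_{\mc{P}}$ is an orientation-preserving circle homeomorphism whose rotation number is, by definition, the prime ends rotation number of $f$ in $U$. Since by hypothesis this is nonzero modulo $\Z$, $\hat{f}|_{\mc{P}}$ has no fixed points on $\mc{P}$. This is the fact I want to contradict.

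For the first assertion, suppose $x\in\partial U$ is a fixed point of $f$. The strategy is to manufacture a fixed prime end from $x$. Pick a small topological disk $B$ around $x$ and consider the connected components of $B\cap U$, i.e.\ the ``tongues'' of $U$ accumulating at $x$. Since $f$ fixes $x$, it permutes the tongues accumulating at $x$, and correspondingly permutes the set of prime ends whose impression contains $x$. Using the nonwandering hypothesis in $U$, one argues that this permutation cannot act freely: a purely rotating orbit of tongues would force a small open set in $U$ to visit disjoint neighborhoods indefinitely, producing a wandering open set. Refining this with a chain-of-cross-cuts argument (in the spirit of Cartwright--Littlewood or Mather), one obtains an actual prime end fixed by $\hat{f}$ over $x$, contradicting the previous paragraph.

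For the ``moreover'' assertion, assume $U$ is unbounded and let $y\in\R^2\sm U$ be a fixed point of $f$. If $y\in\partial U$, the first part applies. Otherwise $y$ lies in some connected component $V$ of the interior of $\R^2\sm\ol{U}$. Passing to $S^2=\R^2\cup\{\infty\}$, the unboundedness of $U$ means $\infty\in\partial U$, so $V$ is a bounded open set in $\R^2$ with $\partial V\subset\partial U$. The component $V$ containing $y$ is $f$-invariant (since its frontier is $f$-invariant and $y$ is fixed), so one can either locate a fixed point of $f$ on $\partial V\subset \partial U$, reducing to the first part, or exploit the invariant structure of $V$ together with the prime-end machinery applied to $U$ to derive a contradiction via a Brouwer-type/linking argument with the nonzero prime ends rotation number.

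The main obstacle I anticipate is the first step: turning a boundary fixed point into a fixed prime end. The boundary $\partial U$ can be extremely pathological (for example, an indecomposable continuum), and many distinct prime ends may have impression containing the same point $x$, permuted nontrivially by $\hat{f}$. The role of the nonwandering hypothesis is precisely to preclude the ``rotating tongues'' configurations that would otherwise be compatible with all this; translating that topological intuition into a rigorous contradiction via careful control of cross-cut chains converging to $x$ is where the real work of the argument lies.
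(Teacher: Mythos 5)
The paper does not prove Theorem~\ref{th:kln}; it is quoted directly as Theorem~B of \cite{KLN}, so there is no internal proof to compare against. What I can evaluate is whether your sketch would actually constitute a proof of the cited statement, and here there is a genuine gap that you have already half-identified yourself.

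The crux of your plan is the step ``manufacture a fixed prime end from a fixed point $x\in\partial U$ using nonwandering''. This is exactly where the theorem lives, and the heuristic you give does not carry it. The ``rotating tongues'' picture suggests that a free permutation of tongues at $x$ should create a wandering open set, but this does not follow: a component of $B\cap U$ need not be moved off of itself under iteration merely because the tongue it sits in is permuted nontrivially, because the iterated image is free to re-enter $B\cap U$ through a \emph{different} tongue, or to leave $B$ entirely and return later along a recurrent orbit. In fact, without the nonwandering hypothesis, boundary fixed points coexisting with irrational prime ends rotation number are a well-known phenomenon (Handel, Herman, Walker), and the cross-cut/tongue combinatorics there looks locally just like what you describe; so the contradiction must use nonwandering in a structurally deeper way than ``tongues rotate, hence wander''. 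The actual argument in \cite{KLN} goes through Le Calvez's equivariant foliated Brouwer theory (constructing a transverse foliation for a maximal isotopy, analysing its leaves near the boundary fixed point, and relating the leaf structure to the prime ends rotation number); the nonwandering hypothesis is used to control the dynamics of the foliation, not to directly produce a wandering tongue. Your phrase ``where the real work of the argument lies'' is accurate: that real work is the content of the theorem, and the sketch does not supply it.

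The ``moreover'' part has a similar issue. You reduce to the case where the fixed point $y$ lies in an invariant component $V$ of the interior of $\R^2\sm\ol{U}$, but the dichotomy you then propose (``locate a fixed point on $\partial V\subset\partial U$, or exploit the invariant structure of $V$\dots via a Brouwer-type/linking argument'') is not an argument: a fixed point in $V$ does not by itself produce a fixed point on $\partial V$, and the second branch is a placeholder. The correct route again passes through the index/linking analysis of \cite{KLN} on the sphere, where unboundedness of $U$ forces $\infty$ into $\partial U$ and the complement of $U$ in $S^2$ becomes a single continuum whose dynamics is tied to the circle of prime ends. I would recommend simply citing \cite{KLN} for this theorem, as the paper does, rather than attempting a self-contained proof; a genuine proof requires tools (maximal isotopies, transverse foliations, prime end index formulas) that are well beyond a cross-cut sketch.
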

For the sake of brevity of the exposition, we will not enter into the details of prime ends, since we will use them only tangentially. We only mention that the prime ends compactification of the open topological disk $U\subset \R^2$ is obtained by attaching a circle called the circle of prime ends of $U$, thus obtaining a space $U\sqcup \T^1$ topologized in a way that it is homeomorphic to the closed unit disk.  If $f$ is a homeomorphism of $\R^2$ leaving $U$ invariant, then $f|_U$ extends to $U\sqcup \T^1$. The prime ends rotation number of $f$ in $U$, denoted $\rho(f,U)\in \R/\Z$, is the usual Poincar\'e rotation number of the orientation preserving homeomorphism induced on $\T^1$ by the extension of $f|_U$. 
For more details and definitions, we refer the reader to \cite{KLN}.

We will use a consequence of Theorem \ref{th:kln} for which we need a definition. 
Let $f$ be an orientation preserving homeomorphism of $\R^2$ leaving $U$ invariant and such that $f|_U$ is nonwandering. The latter condition implies that $f|_U$ has some fixed point $z_0$, due to a classic result of Brouwer \cite{fathi}. Consider the annulus $\R^2\sm \{z_0\}$, which we may identify with $\A=\T^1\times \R$ via a homeomorphism. We can regard $f$ as a homeomorphism of $\A\sqcup\{z_0\}\simeq \R^2$, where $z_0$ is the lower end of $\A$. Let $\pi_1\colon \R^2\to \A$ be the covering projection, and $\hat{f}\colon \R^2\to \A$ a lift of $f|_\A$. Since $f|_\A$ is isotopic to the identity, $\hat{f}$ commutes with the Deck transformations $(x,y)\mapsto (x+k,y)$, $k\in \Z$. If $z\neq z_0$ is a periodic point of $f$, we may define its rotation number associated to the lift $\hat{f}$ as follows: let $\hat{z}\in \pi_1^{-1}(z)$ be any point. Then there exist $k\in \Z$ and $m\in \N$ such that $\hat{f}^m(\hat{z})=\hat{z}+(k,0)$, and these numbers do not depend on the choice of $\hat{z}$. The rotation number $\rho(\hat{f}, z)$ is then defined to be $k/m$. Note that $\rho(\hat{f}+(k,0),z) = \rho(\hat{f}, z)+(k,0)$.

\begin{corollary}\label{coro:kln} All periodic points in $\bd U$ have the same rotation number associated to $\hat{f}$. If $U$ is unbounded, the same is true for all periodic points in $\R^2\sm U$.
\end{corollary}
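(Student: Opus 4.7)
The plan is to exhibit a single real number $\tilde\rho(\hat f, U) \in \R$ that equals $\rho(\hat f, z)$ for every periodic $z \in \bd U$ (and for every periodic $z \in \R^2 \setminus U$ in the unbounded case). The entire argument consists in constructing a canonical lift to $\R$ of the prime ends rotation number $\rho(f, U) \in \R/\Z$ using $\hat f$, and identifying it with $\rho(\hat f, z)$.

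First I would set up the relevant covering geometry. Since the inclusion $U \setminus \{z_0\} \hookrightarrow \A$ induces an isomorphism on $\pi_1$, the preimage $\til U := \pi_1^{-1}(U \setminus \{z_0\})$ is connected and is the universal cover of the annulus $U \setminus \{z_0\}$, embedded in $\til \A$. The prime ends compactification of $U$ attaches a circle $\bd_p U$ to $U$; its universal cover, a copy of $\R$, attaches to $\til U$ along its ``outer'' end, and $\hat f|_{\til U}$ extends continuously to $\til U \cup \R$. I would define $\bar f \colon \R \to \R$ as the induced lift of the prime ends circle map $\tilde f|_{\bd_p U}$, and set $\tilde \rho(\hat f, U) := \lim_n (\bar f^n(x) - x)/n$; its reduction mod $\Z$ is the usual prime ends rotation number.

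The central step is showing $\tilde \rho(\hat f, U) = p/q$ for any periodic $z \in \bd U$ with $\hat f^q(\hat z) = \hat z + (p, 0)$. Since $f^q(z) = z$, the map $\tilde f^q$ permutes the prime ends above $z$, so there is a $\tilde f^q$-periodic prime end $\eta$ above $z$, of some period $k$; lifting to $\tilde \eta \in \R$, one has $\bar f^{qk}(\tilde \eta) = \tilde \eta + m$ for a unique $m \in \Z$. The key identification is $m = pk$: both integers count the winding around $z_0$ of the $qk$-th iterate of $z$, since a generator of $\bd_p U$ is homotopic inside $U \setminus \{z_0\}$ to a small loop around $z_0$, so the $\Z$-action on the lifted prime ends circle matches the deck action on $\til \A$. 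This yields $\tilde \rho(\hat f, U) = pk/(qk) = p/q = \rho(\hat f, z)$, and hence all periodic points on $\bd U$ share the common rotation number $\tilde \rho(\hat f, U)$.

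Theorem \ref{th:kln} controls the modulo-$\Z$ picture: iterating to $F = f^N$ with $N$ a common period makes any two periodic boundary points fixed by $F$, so Theorem \ref{th:kln} forces $\rho(F, U) \equiv 0 \pmod{\Z}$, consistent with the fact that for the corresponding lift $\hat F$ of $F$ (the one fixing a chosen $\hat z$) the lifted value $\tilde \rho(\hat F, U)$ must be an integer. The main obstacle is the winding-number identification $m = pk$: the Carath\'eodory correspondence $\bd_p U \to \bd U$ may be many-to-one near $z$, and one must verify that the integer shift depends intrinsically on $z$ rather than on the choice of prime end above it. For the second assertion, when $U$ is unbounded, the same argument applies to periodic points in $\R^2 \setminus U$, invoking the unbounded-$U$ clause of Theorem \ref{th:kln} to extend the identification from $\bd U$ to the entire complement.
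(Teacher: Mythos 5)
The paper's proof is by contradiction via a finite cover: assuming two periodic boundary points with different rotation numbers, after passing to a power and choosing the lift, one takes $\hat{x}_0$ fixed by $\hat{f}$ and $\hat{f}(\hat{x}_1)=\hat{x}_1+(k_1,0)$ with $k_1\neq 0$; on the double cover $\til{\A}=(\R/2k_1\Z)\times\R$ the involution $\til{T}$ has prime ends rotation number $1/2$ on $\til U$, so one of $\til{f}$, $\til{T}\til{f}$ has nonzero prime ends rotation number while each of them has a fixed point in $\bd\til U$ (namely $\til{x}_0$, resp.\ $\til{x}_1$), contradicting Theorem~\ref{th:kln}. Your approach is genuinely different and more ambitious: rather than proving uniqueness by contradiction, you try to construct a canonical \emph{lifted} prime ends rotation number $\tilde\rho(\hat f,U)\in\R$ and show it equals $p/q$ for \emph{every} periodic $z\in\bd U$. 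If it worked, it would give a formula, not just uniqueness.

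The gap is exactly the step you flag: the identification $m=pk$. ``Both integers count the winding around $z_0$'' is not an argument, because there is no a priori equivariant correspondence between individual lifted prime ends and individual lifted boundary points -- the compatibility of the two Deck actions tells you that shifting a prime end by $j\in\Z$ corresponds to shifting boundary data by $j$, but it does not tell you \emph{which} lift $\tilde\eta$ is tethered to \emph{which} lift $\hat z$. The natural way to pin this down is a ray argument (take a crosscut or ray in $U$ converging to $\eta$ and landing at $z$, lift it, and compare endpoints of $\hat f^{qk}(\hat\gamma)$ and $\hat\gamma+(m,0)$), but that requires $\eta$ to be accessible with principal point $z$, and nothing in the hypotheses guarantees that $z$ is accessible, let alone that the $\tilde f^{q}$-periodic prime end you selected over $z$ is accessible. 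There is also a preliminary issue you glide over: the set of prime ends with $z$ in their impression is $\tilde f^q$-invariant, but you still need to argue it contains a $\tilde f^q$-periodic element (this works because that set is a closed arc and $\tilde f^q$ preserves orientation, fixing the endpoints -- but those endpoints may be non-accessible, which loops back to the same obstruction). Finally, your last paragraph invoking Theorem~\ref{th:kln} only recovers the modulo-$\Z$ constraint $\rho(F,U)\equiv 0$, which is strictly weaker than what is needed: the whole point of the corollary is to distinguish lifts by integers, and that extra information is precisely what the paper extracts by the order-two Deck transformation on the double cover, which separates the parities of the shift integers without ever having to identify a specific prime end over a specific boundary point.
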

\begin{proof}
Suppose for a contradiction that $f$ has two periodic points in $\bd U$ with different rotation numbers associated to a lift $\hat{f}$; \ie there are $\hat{x}_0, \hat{x}_1\in \R^2$ such that $\pi_1(\hat{x}_i)\in \bd U$ and $\hat{f}^{m_i}(\hat{x}_i) = \hat{x}_i+(k_i,0)$, with $k_0/m_0\neq k_1/m_1$. Using $f^{m_0m_1}$ instead of $f$ (which is also nonwandering in $U$ by Proposition \ref{pro:nw-power}), we may assume that $m_1=m_2=1$. Moreover, we may assume that $k_0=0$ by choosing the lift $\hat{f}$ appropriately. Thus, $\hat{x}_0$ is a fixed point of $\hat{f}$ and $\hat{f}(\hat{x}_1) = \hat{x}_1+(k_1,0)$ with $k_1\neq 0$.

Let $\til{\A} = (\R/2k_1\Z)\times \R$, and $\tau\colon \R^2\to \til{\A}$ the covering projection. Note that $\pi_1\colon \R^2\to \A$ induces on $\til{\A}$ a finite covering $\til{\pi}_1\colon \til{\A}\to \A$, whose group of Deck transformations is generated by the map $(x,y)\mapsto (x+1+\R/2k_1\Z, y)$. In particular, the homeomorphism $\til{T}(x,y) = (x+k_1+\R/2k_1\Z, y)$ on $\til{\A}$ is a Deck transformation of $\til{\pi}_1$ such that $\til{T}^2=\id$.

The map $\hat{f}$ induces on $\til{\A}$ a homeomorphism $\til{f}$, which is a lift of $f|_\A$ by $\til{\pi}_1$. Since $f|_\A$ is homotopic to the identity, $\til{f}$ commutes with the covering transformations and in particular with $\til{T}$. Denote by $\til{\A}_* = \til{\A}\sqcup\{z_0^*\}$ where $z_0^*$ is the lower end of $\til{\A}$, topologized in the usual way so that $\til{\A}_*\simeq \R^2$. We can extend $\til{f}$ and $\til{T}$ and regard them as maps of $\til{\A}_*$ by fixing $z_0^*$. 

Let $\til{U} = \til{\pi}^{-1}(U\sm z_0)\sqcup\{z_0^*\}$. Then $\til{U}$ is an open $\til{f}$-invariant and $\til{T}$-invariant topological disk, and $\til{U}$ is unbounded in $\til{\A}_*$ if and only if $U$ is unbounded in $\R^2$.
The fact that $f|_{U\sm \{z_0\}}$ is nonwandering implies that $\til{f}|_{\til{U}\sm\{z_0^*\}}$ is also nonwandering due to Proposition \ref{pro:nw-lift}. 

Since $\til{T}^2$ is the identity and $\til{T}$ has no fixed points, one easily sees that the map induced on the prime ends of $\til{U}$ by $\til{T}$ has a rotation number $1/2$ (mod $\Z$) (for instance, any accessible prime end of $\til{U}$ is periodic of period $2$ and non-fixed by the map induced by $\til{T}^2$ on the circle of prime ends).

Since $\til{f}$ commutes with $\til{T}$, the respective maps induced on the prime ends of $\til{U}$ also commute. It is an easy consequence of the definition of rotation number that, if two orientation-preserving circle homeomorphisms commute, then the rotation number of their composition is the sum of their rotation numbers (mod $\Z$). 
Thus, $$\rho(\til{T}\til{f},\til{U}) = \rho(\til{f},\til{U})+ 1/2 \text{ (mod $\Z$)}.$$

In particular, if $\rho(\til{f},\til{U})=0$, then $\rho(\til{T}\til{f}, \til{U})=1/2 \neq 0$ (mod $\Z$). Note that $\til{T}\til{f}$ is another lift of $f|_\A$ to $\til{\A}$, and therefore is also nonwandering in $U$. Let $\til{x}_0=\tau(\hat{x}_0)$ and $\til{x}_1=\tau(\hat{x}_1)$. Then $\til{x}_0$ is a fixed point of $\til{f}$, and $\til{x}_1$ is a fixed point of $\til{T}\til{f} = \til{T}^{-1}\til{f}$. Moreover, both $\til{x}_0$ and $\til{x}_1$ belong to $\bd \til{U}$. Thus both maps $\til{f}$ and $\til{T}\til{f}$ have a fixed point in $\bd \til{U}$, but one of them has a nonzero prime ends rotation number, contradicting Theorem \ref{th:kln}.

This proves the first claim of the theorem. In the case that $U$ is unbounded, the same proof by contradiction applies, choosing $\hat{x}_i$ in $\R^2\sm U$ instead of $\bd U$.
\end{proof}

\section{Proof of Theorem \ref{th:main}: uniformly bounded deviations}
We will divide the proof of Theorem \ref{th:main} in two parts. This section is devoted to the proof of the first part, which is contained in the following proposition. The proof of the existence of an essential invariant annulus is postponed to Section \ref{sec:th:annulus}.

\begin{proposition}\label{pro:main-A} Under the hypotheses of Theorem \ref{th:main}, there exists $M>0$ such that
$$\abs{\smash{P_1(\hat{f}^n(z) - z)}}\leq M \text{ for all } z\in \R^2,\, n\in \Z.$$
\end{proposition}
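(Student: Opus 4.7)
My plan is to dichotomize on whether $f$ is strictly toral. The non-strictly-toral case is handled by rotation-set bookkeeping; the strictly-toral case is handled by contradiction, using Corollary \ref{coro:kln} together with the realization results Theorems \ref{th:reali} and \ref{th:reali-toral}.

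\textbf{The non-strictly-toral case.} First I would rule out that $\fix(f^k)$ is fully essential for some $k$: if it were, each connected component of $\T^2\setminus\fix(f^k)$ would be inessential, its $\R^2$-lifts would be bounded topological disks, and every orbit of $\hat f^k$ would have zero rotation vector, forcing $\rho(\hat f^k)=k\{0\}\times[a,b]=\{0\}$, contradicting $a<b$. Hence some $f^k$ must be annular: there exist $v\in\Z^2_*$ and a lift $\hat g$ of $f^k$ with $p_v(\hat g^n-\id)$ uniformly bounded. Since $\rho(\hat g)$ is a $\Z^2$-translate of $k\{0\}\times[a,b]$, a nondegenerate vertical segment on which $p_v$ must be constant, $v$ has to be horizontal. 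Translating $\hat g$ back to $\hat f^k$ (an integer shift away) preserves the horizontal bound, and it extends to every $n$ by writing $n=mk+r$ with $0\le r<k$ and using that $\hat f-\id$ is bounded on a fundamental domain.

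\textbf{The strictly toral case.} Assume $f$ is strictly toral and, for contradiction, that $|P_1(\hat f^n(z)-z)|$ is unbounded; WLOG unbounded above. Pick rationals $p_1/q<p_2/q$ in $[a,b]$ with a common denominator; Theorems \ref{th:reali} and \ref{th:reali-toral} then furnish $z_1,z_2\in\R^2$ with $\hat f^q(z_i)=z_i+(0,p_i)$ and $\pi(z_i)\in\ess(f)$, i.e.\ two periodic points of $f$ in $\ess(f)$ with distinct vertical rotation numbers. Because the preserved measure has full support, a measure-pigeonhole forces every connected component of $\ine(f)$ to be $f$-periodic; for each such periodic component the $\Z^2$-displacement of its lift under the corresponding iterate of $\hat f$ must lie in $\{0\}\times\Z$ (since its normalization belongs to $\rho(\hat f)$), so orbits originating in $\ine(f)$ have uniformly bounded horizontal deviation. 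The unbounded orbit therefore issues from some $x\in\ess(f)$, and by Theorem \ref{th:strictly}, $U_\epsilon'(x)$ is fully essential for every $\epsilon>0$. Consequently $\hat U:=\pi^{-1}(U_\epsilon'(x))$ is a connected $\hat f$-invariant open subset of $\R^2$ all of whose complementary components are bounded.

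\textbf{Deriving the contradiction, and the main obstacle.} To finish, I would descend to the horizontal cylinder $\R\times(\R/\Z)$ (quotient of $\R^2$ by vertical integer translations); there $z_1,z_2$ become genuine periodic points $\bar z_i$ of the induced map $\bar f$ with distinct rotation numbers inherited from $p_1\neq p_2$, and $\hat U$ descends to an $\bar f$-invariant open set $\bar U$ which, by the standing assumption, is unbounded toward one end of the cylinder while still having only bounded complementary components. By carefully filling in an $\bar f$-invariant family of those components one should produce a simply connected $\bar f$-invariant open subset $D$ in a one-point compactification of the relevant end (a topological copy of $\R^2$) whose boundary contains lifts of both $\bar z_1$ and $\bar z_2$; Corollary \ref{coro:kln} then forces the two rotation numbers to coincide---contradiction. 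The \emph{main obstacle} is precisely this filling-in construction: one must choose the complementary components to fill so that $D$ is simultaneously $\bar f$-invariant, genuinely simply connected, and captures both periodic orbits on its boundary, while also securing the nonwandering hypothesis of Corollary \ref{coro:kln} in this noncompact cylindrical setting---since the lifted invariant measure is only $\sigma$-finite, Poincar\'e recurrence does not apply out of the box and one will need to exploit the specific structure of $\bar U$ as a projection of an orbit-union of a single $\T^2$-ball (and, likely, an argument in the spirit of Proposition \ref{pro:nw-lift} adapted to $\Z$-covers).
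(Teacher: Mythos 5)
The overall endgame of your proposal coincides with the paper's: produce two periodic points with distinct vertical rotation numbers that sit outside a single unbounded invariant open topological disk in a plane, and then invoke Corollary~\ref{coro:kln} for the contradiction. Your preliminary reductions are also on target and essentially match the paper's: ruling out $\fix(f^k)$ being fully essential (via \cite[Proposition 5.1]{kt-ess}, since the rotation set is nondegenerate), disposing of the annular case by observing the bounded direction must be horizontal, and using Theorems~\ref{th:reali} and~\ref{th:reali-toral} to place realizing periodic points $z_1,z_2$ with $\pi(z_i)\in\ess(f)$.

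However, the step you yourself flag as the ``main obstacle'' is precisely where the proposal has a genuine gap, and the paper resolves it by a construction you do not anticipate. There is no workable way to manufacture the required invariant disk out of $U_\epsilon'(x)$. The set $\pi^{-1}(U_\epsilon'(x))$ descends to a horizontal cylinder as an open set reaching \emph{both} ends, with bounded complementary components scattered throughout; any ``filling'' that keeps $\bar z_1$ and $\bar z_2$ in the frontier leaves holes and destroys simple connectivity, and there is no reason for two unrelated periodic points to lie on the boundary of the \emph{same} complementary component. What the paper uses instead is the sets $\omega_\pm$ (the unions of the unbounded connected components of $\bigcap_{i\in\Z}\hat{f}^i(H^\pm)$), which are closed, invariant, $T_2$-periodic, and translate-monotone in the sense of Proposition~\ref{pro:omegapro}. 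The crux of the argument is the two-layered combinatorial step (Claim~\ref{claim:omegasetocam}, Lemma~\ref{lem:chain}, and Proposition~\ref{pro:cadeia-anular} from \cite{kt-pseudo}) showing that, under the unbounded-deviation assumption, each $\pi(z_i)$ must in fact lie in $\pi(\omega_+)$: if not, one exhibits an eventually $(\Z^2\setminus\R\ol{w})$-free chain of invariant fully essential disks, which would force bounded deviations in some direction, contradicting the hypotheses. Once $z_i\in\omega_+$, the disk $U_*$ is simply the component of $\til{\A}_*\setminus\til{\omega}_+$ containing $L_\infty$; one then needs only the \emph{unbounded} clause of Corollary~\ref{coro:kln} (every periodic point in the complement of an unbounded invariant disk shares the rotation number), not that $z_i$ lie on $\bd U_*$. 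You should also note a second, smaller gap: your plan to establish nonwandering on the cylinder by adapting Proposition~\ref{pro:nw-lift} cannot work as stated, since that proposition is about \emph{finite} covers and the vertical $\Z$-cover is infinite. The paper's Claim~\ref{claim:nw} argues measure-theoretically: a wandering open set would force a definite linear escape of mass in the horizontal direction, giving rise to a rotation vector with nonzero first coordinate and contradicting $\rho(\hat{f})\subset\{0\}\times\R$.
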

The remainder of this section contains the proof of this proposition.

Assume that $f$ is under the hypotheses of Theorem \ref{th:main}, \ie $f$ preserves a Borel probability measure $\mu$ of full support and $\rho(\hat{f})$ is a vertical nondegenerate interval containing the origin. 

\begin{claim} We may assume that $\{0\}\times [-1,1]\subset \rho(\hat{f})$.
\end{claim}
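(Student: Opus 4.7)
The plan is to use the transformation rules $\rho(\hat{f}^n + v) = n\rho(\hat{f}) + v$ recorded in Section \ref{sec:rotation} to pass to a suitable iterate of $f$ together with a shifted lift, so that the rotation segment gets dilated and then translated to contain $\{0\}\times[-1,1]$; then I would transfer the conclusion of Proposition \ref{pro:main-A} back to the original map.

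Since $\rho(\hat{f}) = \{0\}\times[a,b]$ contains the origin, we have $a\le 0\le b$ and $b-a>0$. Choose $n\in\N$ with $n(b-a)\geq 3$. Then the interval $[1-nb,\,-1-na]$ has length $n(b-a)-2\ge 1$, hence contains some integer $w_2$. Set $\hat{F} := \hat{f}^n + (0,w_2)$, which is a lift of $f^n$. By the properties from Section \ref{sec:rotation},
\begin{equation*}
\rho(\hat{F}) = n\rho(\hat{f}) + (0,w_2) = \{0\}\times[na+w_2,\,nb+w_2] \supseteq \{0\}\times[-1,1].
\end{equation*}
Moreover, $f^n$ is homotopic to the identity and preserves the same measure $\mu$, which has full support, so $(f^n,\hat{F})$ satisfies the hypotheses of Theorem \ref{th:main} together with the extra normalization. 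This is the form under which I would attempt the rest of the proof of Proposition \ref{pro:main-A}.

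To transfer the conclusion back, suppose Proposition \ref{pro:main-A} has been established for $(f^n,\hat{F})$, yielding $M'>0$ such that $|P_1(\hat{F}^k(z)-z)|\le M'$ for all $k\in\Z$ and $z\in\R^2$. Since $\hat{f}^n$ commutes with integer translations, induction gives $\hat{F}^k(z) = \hat{f}^{nk}(z) + k(0,w_2)$, so $P_1(\hat{F}^k(z)-z) = P_1(\hat{f}^{nk}(z)-z)$ and the bound passes to iterates of $\hat{f}$ in $n\Z$. For an arbitrary $m\in\Z$, writing $m = nk+j$ with $0\le j<n$ and decomposing
\begin{equation*}
P_1(\hat{f}^m(z)-z) = P_1\!\left(\hat{f}^{nk}(\hat{f}^j(z)) - \hat{f}^j(z)\right) + P_1(\hat{f}^j(z)-z),
\end{equation*}
the first term is bounded by $M'$ and the second by a constant $C_j$, since $z\mapsto P_1(\hat{f}^j(z)-z)$ is $\Z^2$-periodic and continuous. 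Setting $M = M' + \max_{0\le j<n} C_j$ gives the required uniform bound for $\hat{f}$. I do not anticipate any real obstacle in this claim: it is a purely algebraic normalization, and the only point to verify is that $n$ can be taken large enough that $[1-nb,-1-na]$ captures an integer.
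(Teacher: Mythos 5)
Your proof is correct and follows the same route as the paper: pass to a power $f^n$ and shift the lift by an integer vector so that the dilated segment contains $\{0\}\times[-1,1]$, then note the horizontal-deviation bound transfers back. You simply make explicit the integer-arithmetic step (finding $w_2$ via $n(b-a)\ge 3$) and the reduction of general $\hat{f}^m$ to $\hat{F}^k$ that the paper leaves as ``easy to verify.''
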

\begin{proof}
Note that $\rho(\hat{f}^k-v) = k\rho(f)-v$ for any $k\in \Z$ and $v\in \Z^2$ (see \cite{m-z}), and so letting $\hat{g} = \hat{f}^k-(0,l)$ for appropriately chosen integers $k,l$ we have that $\{0\}\times [-1,1]\subset \rho(\hat{g})$, and $\hat{g}$ is a lift of $f^k$. It is easy to verify that $\hat{g}$ has uniformly bounded deviations in the horizontal direction if and only if so does $\hat{f}$. Thus we may use $f^k$ and $\hat{g}$ in place of $f$ and $\hat{f}$.
\end{proof}

\begin{claim} There exist points $z_{-1}$, $z_0$, $z_{1}$ in $\R^2$ such that $\hat{f}(z_i) = z_i + (0,i)$ for $i\in \{-1,0,1\}$
\end{claim}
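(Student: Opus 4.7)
The plan is simply to invoke Theorem \ref{th:reali} three times. Under the reduction in the previous claim we have $\{0\}\times[-1,1]\subset\rho(\hat{f})$, so in particular the three rational vectors
\[
(0,-1),\ (0,0),\ (0,1)
\]
all lie in $\rho(\hat{f})$. Since $\rho(\hat{f})$ is the nondegenerate vertical segment $\{0\}\times[a,b]$ (in particular an interval in the sense of Theorem \ref{th:reali}) and $f$ preserves a Borel probability measure of full support, the hypotheses of Theorem \ref{th:reali} are satisfied.

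Applying Theorem \ref{th:reali} with $q=1$ and $v=(0,i)$ for each $i\in\{-1,0,1\}$ yields a point $z_i\in\R^2$ with
\[
\hat{f}(z_i)=z_i+(0,i),
\]
which is precisely the claim. (Note: the rational vector $(0,0)$ could alternatively be realized by appealing to Brouwer's theorem via the fact that any $\hat{f}$-periodic orbit with rational rotation vector forces fixed points in between, but the application of Theorem \ref{th:reali} handles all three cases uniformly.) No extra work is needed; the main obstacle—existence of fixed points of $\hat{f}$ with prescribed integer translates under a measure-preserving, homotopic-to-identity toral homeomorphism—is entirely encapsulated in the cited realization theorem of Franks/Koropecki--Tal/D\'avalos recalled in Section 2.
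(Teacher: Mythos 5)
Your proof is correct and is exactly the paper's argument: the paper's one-line proof ``It follows from Theorem \ref{th:reali} and from our previous assumption'' is precisely the triple application of Theorem \ref{th:reali} with $q=1$ and $v=(0,i)$, $i\in\{-1,0,1\}$, that you spell out. Nothing to add.
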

\begin{proof} It follows from Theorem \ref{th:reali} and from our previous assumption.
\end{proof}

The proof of Proposition \ref{pro:main-A} will be by contradiction. Thus, from now on we assume that $\hat{f}$ has unbounded deviations in the horizontal direction, \ie
$$\sup\{ \abs{\smash{P_1(\hat{f}^n(z) - z)}} : z\in \R^2,\, n\in \Z\} = \infty.$$
Under this assumption, we have 
\begin{claim} $f$ is strictly toral.
\end{claim}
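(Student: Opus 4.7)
The claim amounts to verifying two things for every $n\in\N$: (i) $f^n$ is non-annular, and (ii) $\fix(f^n)$ is not fully essential. I treat each by contradiction, using the standing assumption that $\hat f$ has unbounded horizontal deviations together with $\{0\}\times[-1,1]\subset\rho(\hat f)$.

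For (i), suppose $f^n$ is annular: there exist $v\in\Z^2_*$ and a lift $\hat g$ of $f^n$ with $\sup_{z,k}|p_v(\hat g^k(z)-z)|<\infty$. Every lift of $f^n$ has the form $\hat g=\hat f^n+w$ for some $w\in\Z^2$, and since $\hat f^n$ commutes with $\Z^2$-translations, an induction gives $\hat g^k=\hat f^{nk}+kw$. The uniform bound on $p_v(\hat g^k(z)-z)=p_v(\hat f^{nk}(z)-z)+kp_v(w)$ forces $p_v(w)=0$ and then $|p_v(\hat f^{nk}(z)-z)|$ uniformly bounded over $z\in\R^2,\ k\in\Z$. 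For a general $m\in\Z$ write $m=nk+r$ with $0\le r<n$; since $z\mapsto\hat f^r(z)-z$ is $\Z^2$-periodic, hence bounded, one extends to $|p_v(\hat f^m(z)-z)|$ uniformly bounded for all $m\in\Z$. Taking limits in the definition of $\rho(\hat f)$, this gives $p_v(\rho(\hat f))=\{0\}$, so $\rho(\hat f)\subset v^\perp$. Since the nondegenerate segment $\{0\}\times[-1,1]$ is contained in $v^\perp$, $v^\perp$ must be the vertical axis and $v$ must be horizontal; then uniform boundedness of $|p_v(\hat f^m(z)-z)|$ is uniform boundedness of $|P_1(\hat f^m(z)-z)|$, contradicting our standing assumption.

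For (ii), suppose $\fix(f^n)$ is fully essential for some $n$. Since $\rho(\hat f^n)=n\rho(\hat f)=\{0\}\times[na,nb]$, any lifted fixed point of $f^n$ has displacement vector $(0,j)$ for an integer $j\in[na,nb]$, and by Theorem~\ref{th:reali} every such $(0,j)$ is realized. Hence $\pi^{-1}(\fix(f^n))=\bigsqcup_{j} F_j$ with $F_j=\{z:\hat f^n(z)=z+(0,j)\}$ (a finite disjoint union). A standard topological fact about closed fully essential subsets of $\T^2$ (cf.\ \cite{kt-ess}) gives that every connected component $W$ of $\R^2\setminus\pi^{-1}(\fix(f^n))$ is bounded. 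After possibly replacing $n$ by a multiple (so that each component of $\T^2\setminus\fix(f^n)$ is $f^n$-invariant, using that the positive-$\mu$-measure components have finite $f^n$-orbits), the commutation of $\hat f^n$ with $\Z^2$ combined with verticality of $\rho(\hat f^n)$ forces $\hat f^n(W)=W+(0,j_W)$ for each component $W$, with $j_W$ an integer. Thus $|P_1(\hat f^{nk}(z)-z)|\le\diam(W)$ for every $z\in W$ and every $k\in\Z$.

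The remaining step is to obtain a \emph{uniform} bound on $\diam(W)$ over all components $W$; once this is done, the $m=nk+r$ reduction from (i) upgrades to a uniform bound on $|P_1(\hat f^m(z)-z)|$ for all $z\in\R^2,\ m\in\Z$, contradicting our hypothesis and completing the proof. I expect this uniform diameter bound to be the main obstacle and to follow from finer topological properties of the inessential lifted components (each such $W$ injects into $\T^2$ via $\pi$, and its closure is disjoint from its nontrivial $\Z^2$-translates), likely combined with the invariance of the full-support measure $\mu$ to control the sizes of the components of $\T^2\setminus\fix(f^n)$.
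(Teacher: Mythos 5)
Your argument for the non-annularity of the $f^n$ is sound and, if anything, more self-contained than the paper's: the paper first invokes Proposition 1.4(5) of \cite{kt-ess} to reduce annularity of $f^n$ to annularity of $f$ (after producing a fixed point of $\hat f$ via Theorem~\ref{th:reali}), whereas you work directly with an arbitrary lift $\hat g=\hat f^n+w$. Your statement that the uniform bound alone ``forces $p_v(w)=0$'' needs one extra input to close: either use the fixed point $z_0$ of $\hat f$ (so $p_v(\hat g^k(z_0)-z_0)=kp_v(w)$, bounded only if $p_v(w)=0$), or observe that $\rho(\hat g)\subset v^\perp$ combined with $\rho(\hat g)=n\rho(\hat f)+w$ and $0\in\rho(\hat f)$ already gives $p_v(w)=0$. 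Either way, this half is fine.

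The second half, however, has a genuine gap that you yourself flag. The paper disposes of the case that $\fix(f^n)$ is fully essential in two lines: since $f^n$ is already known to be non-annular, Proposition 5.1 of \cite{kt-ess} says that a non-annular map with a fully essential fixed-point set has every lift with rotation set a single point, contradicting $\rho(\hat f^n)=n\rho(\hat f)$ being a nondegenerate segment. Your bare-hands route tries to extract a uniform horizontal bound from the complementary components $W$ of $\pi^{-1}(\fix(f^n))$, and this is precisely where things break down. Knowing each $W$ is bounded gives a bound $\abs{P_1(\hat f^{nk}(z)-z)}\le\diam(W)$ that depends on $W$; there can be infinitely many components with diameters tending to infinity, and the measure $\mu$ does not obviously control this (components need not all have positive measure, and even if they do their diameters are not controlled by their measures). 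The parenthetical remark that one can pass to a further power so that ``each component of $\T^2\setminus\fix(f^n)$ is $f^n$-invariant'' also does not work in general: the permutation of infinitely many components need not be eventually trivial. The uniform diameter bound you need is essentially the content of Proposition 5.1 of \cite{kt-ess} (or its proof), and you would in effect have to reprove it; as written the proposal does not establish the claim.
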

\begin{proof}
Let us first show that $f^k$ is non-annular for any $k\in \N$. By Theorem \ref{th:reali} and the fact that $\rho(\hat{f})$ contains the origin, we know that $\hat{f}$ has a fixed point. Thus, if $f^k$ is annular for some $k\in \N$, then so is $f$ (by \cite[Proposition 1.4(5)]{kt-ess}). The fact that $\rho(\hat{f})$ is a nondegenerate vertical interval easily implies that $f$ can only be annular if $\hat{f}$ has uniformly bounded deviations in the horizontal direction, which by our assumptions is not possible. 

Now suppose that $\fix(f^k)$ is fully essential for some $k\in \N$. Since $f^k$ is non-annular, by \cite[Proposition 5.1]{kt-ess} applied to $f^k$, some lift (and thus any lift) of $f^k$ to $\R^2$ has a rotation set consisting of a single point. This is a contradiction, since $\rho(\hat{f}^k) = k\rho(\hat{f})$ is a nondegenerate interval. 

Thus $\fix(f^k)$ is not fully essential, and $f^k$ is non-annular for any $k\in \N$, which means that $f$ is strictly toral.
\end{proof}

\begin{claim} We may assume that $\pi(z_i)\in \ess(f)$ for each $i\in \{-1,0,1\}$. 
\end{claim}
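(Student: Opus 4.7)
The plan is to apply Theorem \ref{th:reali-toral} directly, once for each index $i\in\{-1,0,1\}$. The previous claim asserts that $f$ is strictly toral, and the claim before that produces, for each such $i$, a point $z_i\in \R^2$ with $\hat{f}(z_i)=z_i+(0,i)$. Setting $q=1$ and $v=(0,i)\in \Z^2$, these are exactly the hypotheses of Theorem \ref{th:reali-toral}, so that theorem furnishes a point $z_i'\in \R^2$ satisfying the same identity $\hat{f}(z_i')=z_i'+(0,i)$ and, in addition, $\pi(z_i')\in \ess(f)$. Replacing the original triple $(z_{-1},z_0,z_1)$ by $(z_{-1}',z_0',z_1')$ establishes the claim.

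Since the conclusion is obtained by a direct invocation of Theorem \ref{th:reali-toral}, there is no genuine obstacle to overcome; the only nontrivial ingredient is the strict torality of $f$, which has just been verified. The three applications are independent and use different vectors $v$, and no compatibility among the points $z_i'$ is required at this stage, only the individual property $\pi(z_i')\in \ess(f)$. The point of the reduction, to be used later, is that once $\pi(z_i)\in \ess(f)$, Theorem \ref{th:strictly} guarantees that $U_\epsilon'(\pi(z_i))$ is fully essential for every $\epsilon>0$, which is the topological leverage that will combine with the assumed horizontal unboundedness to produce a contradiction.
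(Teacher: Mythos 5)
Your proof is correct and takes exactly the same approach as the paper: the paper's proof is a one-line citation of Theorem~\ref{th:reali-toral}, and you have simply spelled out the three applications (with $q=1$, $v=(0,i)$) using the strictly-toral property and the points $z_i$ already produced.
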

\begin{proof} It follows from Theorem \ref{th:reali-toral}.
\end{proof}

\subsection{The sets $\omega_+$ and $\omega_-$}

The definitions and properties that we introduce here were already used in \cite{tal-trans, kt-pseudo}. Since we work with the vertical direction as a reference, we use a simplified notation.

Denote by $H^+$ and $H^-$ the half-planes $\{(x,y) : x\geq 0\}$ and $\{(x,y):x\leq 0\}$, respectively. Let $f\colon \T^2\to \T^2$ be a homeomorphism homotopic to the identity, and $\hat{f}\colon \R^2\to \R^2$ a lift of $f$. Define the set $\omega_+\subset \R^2$ as the union of all unbounded connected components of
$$\bigcap_{i=-\infty}^{\infty}\hat f^{i}(H^{+}).$$
The set $\omega_-$ is defined analogously using $H^-$ instead of $H^+$.

The following properties are easy consequences of the definitions. 
\begin{proposition}\label{pro:omegapro} The sets $\omega_\pm$ are closed, and
\begin{itemize}
\item $\hat{f}(\omega_\pm)=\omega_\pm$;
\item $\omega_\pm + (0,b) \subset \omega_\pm$ for all $b\in \Z$;
\item $\omega_+ + (a,0) \subset \omega_+$ and $\omega_- - (a,0)\subset \omega_-$ for all $a\in \N$;
\item $\omega_\pm$ is non-separating, and its complement is simply connected.
\item Every connected component of $\omega_+$ is unbounded to the right, and every connected component of $\omega_-$ is unbounded to the left.
\end{itemize}
\end{proposition}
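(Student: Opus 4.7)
Write $K = \bigcap_{i \in \Z} \hat{f}^i(H^+)$, so that $\omega_+$ is the union of the unbounded connected components of $K$; the argument for $\omega_-$ will be symmetric under horizontal reflection. Items 2, 3 and 4 follow routinely by combining integer-translation equivariance of $\hat{f}$ with inclusions at the level of the half-plane. Specifically: $\hat{f}(K) = K$ by re-indexing the intersection, and since $\hat{f}$ is a homeomorphism it permutes the components of $K$ while preserving boundedness, giving item 2; $H^+$ is invariant under $(0,1)$-translation and $\hat{f}$ commutes with integer translations, so each $\hat{f}^i(H^+)$ and hence $K$ is $(0,1)$-invariant, giving item 3; finally $H^+ + (1,0) \subset H^+$ combined with equivariance yields $\hat{f}^i(H^+) + (1,0) \subset \hat{f}^i(H^+)$ so that $K+(a,0)\subset K$ for $a\in\N$, and the translate of an unbounded component of $K$ is a connected unbounded subset of $K$ hence contained in an unbounded component, giving item 4. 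For closedness, I would pass to $S^2=\R^2\cup\{\infty\}$ and set $\hat{K} = K\cup\{\infty\}$; the component $C_\infty$ of $\hat{K}$ containing $\infty$ is closed in $S^2$ (components of closed sets in compact Hausdorff spaces are closed), and a short verification shows $C_\infty\cap \R^2 = \omega_+$ because unbounded components of $K$ accumulate at $\infty$ and so join $\{\infty\}$ in $\hat{K}$, while any bounded component is compact and hence separated from $\infty$.

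For item 5, since $C_\infty$ is a continuum in $S^2$, the classical fact that the complement of a continuum in the sphere has simply connected components immediately gives that each component of $S^2\setminus C_\infty = \R^2\setminus \omega_+$ is simply connected. It remains to show the complement is connected. Because $\omega_+\subset H^+$, the open half-plane $\{x<0\}$ lies entirely in one component $V_1$ of $\omega_+^c$. If $V\neq V_1$ were another component, then $V\subset \{x\geq 0\}$; taking complements in $\omega_+ + (a,0)\subset \omega_+$ shows $z\in \omega_+^c \Rightarrow z-(a,0)\in\omega_+^c$ for $a\in\N$, so $V-(a,0)\subset \omega_+^c$ for every $a\in\N$. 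A dichotomy then applies: either $V-(1,0)\subset V$, in which case iteration forces $V$ to contain arbitrarily leftward points and so to meet $\{x<0\}\subset V_1$, contradicting $V\cap V_1=\emptyset$; or $V-(1,0)$ lies in a distinct component of $\omega_+^c$, and iterating the leftward shift eventually produces a translate meeting $\{x<0\}$, again forcing the identification $V=V_1$ through the resulting chain of containments.

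For item 6, let $C$ be a component of $\omega_+$ and suppose for contradiction that $C\subset \{0\le x\le M\}$, so $C$ is unbounded only vertically. The translate $C+(1,0)\subset K$ is connected and so lies in some component of $K$. If $C+(1,0)\subset C$, iteration gives $C\supset C+(n,0)$ for all $n\in\N$, which forces $C$ to extend to $x=+\infty$, contradicting our assumption. Otherwise $C+(1,0)$ lies in a distinct unbounded component of $K$, and iterating the dichotomy produces a sequence of pairwise disjoint unbounded components of $\omega_+$ populating infinitely many consecutive vertical strips in $H^+$. The main obstacle is to rule out this second scenario, which I plan to do using item 5: the disjoint vertically-unbounded components would carve out narrow channels in $\omega_+^c$ that enclose individual strips, producing essential loops in $\omega_+^c$ which cannot be null-homotopic in a simply connected complement. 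Making this topological picture precise, and in particular bridging the gap between the discrete $\Z$-periodicity of $K$ and the continuous topology needed to build the obstructing loop, is the most delicate part of the argument.
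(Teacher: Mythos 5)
Your handling of the three invariance statements ($\hat f$-invariance, $T_2$-invariance, and $T_1$-sub-invariance) is correct and routine. The problems start with closedness, and they propagate.

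The crucial gap is the claim ``$C_\infty\cap\R^2=\omega_+$, a short verification.'' You justify the inclusion $C_\infty\cap\R^2\subset\omega_+$ by saying ``any bounded component is compact and hence separated from $\infty$.'' Compactness alone does \emph{not} give separation from $\infty$ inside $\hat K$: a bounded component $C$ of $K$ is a maximal connected subset of $K$, but it need not be a maximal connected subset of $K\cup\{\infty\}$, because \emph{other} components can accumulate simultaneously on $C$ and on $\infty$ and thereby glue $C$ to $C_\infty$. A concrete illustration (not itself of the form $\bigcap_i\hat f^i(H^+)$, but showing the abstract principle fails): take $K=\{(0,0)\}\cup\bigcup_{n\ge 1}(\{1/n\}\times\R)$. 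Here every $\{1/n\}\times\R$ is an unbounded component, $\{(0,0)\}$ is a bounded component, yet $\hat K=K\cup\{\infty\}$ is connected (it is the closure of $\bigcup_n(\{1/n\}\times\R)\cup\{\infty\}$), so $C_\infty=\hat K\supsetneq \omega_+\cup\{\infty\}$, and $\omega_+$ is in fact \emph{not} closed. What rules this out in the actual setting is the invariance $K+(1,0)\subset K$ (and the $T_2$-periodicity, which makes $K\cap\{x\le M\}$ project to a compact set in the cylinder), but your argument never invokes these properties in the closedness step; the ``short verification'' is exactly the missing proof. Consequently the simple-connectedness deduction for the components of $\R^2\setminus\omega_+$ is not yet established either, since it uses $S^2\setminus C_\infty=\R^2\setminus\omega_+$.

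Two further points. In the connectedness argument for $\R^2\setminus\omega_+$, the second branch of your dichotomy (``$V-(1,0)$ lies in a distinct component\dots iterating\dots forcing the identification $V=V_1$ through the resulting chain of containments'') is not an argument: showing that some leftward translate $V-(a,0)$ lands inside $V_1$ does not show $V=V_1$, and there is no containment $V\subset V-(a,0)$ or $V_1+(a,0)\subset V_1$ available to close the chain. And for the last item, you explicitly acknowledge that ruling out a vertically unbounded, horizontally bounded component is left open. This is a genuine gap, not a finishing touch: the translation invariance $\omega_++(a,0)\subset\omega_+$ produces only a discrete set of rightward translates of a point, which by itself does not force the \emph{component} to be unbounded to the right; some compactness argument (e.g.\ projecting to the cylinder $\R\times\T^1$, where $K\cap([0,M]\times\T^1)$ is compact, combined with a boundary-bumping lemma) is needed, and that same mechanism is also the natural route to fix the closedness step. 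So the two unfinished parts are tied together, and until they are supplied the proof is incomplete. The paper itself states these facts without proof, referring to \cite{tal-trans,kt-pseudo}, so there is no written argument to compare against, but the gaps above are real.
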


\begin{claim} $\omega_+$ and $\omega_{-}$ are non-empty.
\end{claim}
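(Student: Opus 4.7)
The plan. I would show $\omega_+\neq\emptyset$; the argument for $\omega_-$ is symmetric (for instance, applied to $\hat{f}^{-1}$, whose rotation set is still a vertical segment through the origin, or by reflecting horizontally). Start with a normalization: replacing $\hat{f}$ by $\hat{f}+(m,0)$ for suitable $m\in\Z$ does not alter the hypotheses (the rotation set merely translates horizontally, and the hypothesis of unbounded horizontal displacement is unaffected), so I may assume $P_1(z_0)\geq 0$. Because $\hat{f}$ commutes with integer translations and fixes $z_0$, every point $z_0+(0,n)$, $n\in\Z$, is also a fixed point of $\hat{f}$, and all of them lie in $H^+$, hence in the set $S:=\bigcap_{i\in\Z}\hat{f}^i(H^+)$.

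The core of the argument is that the vertical unit translation $T(z)=z+(0,1)$ satisfies $T(H^+)=H^+$, so $T(S)=S$, and therefore $T$ permutes the connected components of $S$. Let $C$ be the component of $S$ containing $z_0$. If $T(C)=C$, then $C$ is $T$-invariant and contains the unbounded set $\{z_0+(0,n):n\in\Z\}$, so $C$ itself is unbounded and $\omega_+$ contains $C$. The remaining case is $T(C)\neq C$, in which case the components $T^n(C)=C+(0,n)$ are pairwise disjoint. Assume for contradiction that $\omega_+=\emptyset$, so each such component is bounded; note that $\hat{f}(T^n(C))=T^n(C)$, because $\hat{f}$ permutes components of $S$ and fixes $z_0+(0,n)\in T^n(C)$.

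To close the argument by contradiction, I would pass to the filling $\hat{C}$ of $C$ (the union of $C$ with all bounded components of $\R^2\setminus C$); this is a bounded, simply connected, $\hat{f}$-invariant compactum whose interior $U$ is a bounded open $\hat{f}$-invariant topological disk. On $U$, $\hat{f}$ is nonwandering by Poincar\'e recurrence applied to $\tilde{\mu}|_U$, which is finite, $\hat{f}$-invariant, and of full support. Now observe that the $\hat{f}$-orbits of $z_1$ and $z_{-1}$ are vertically unbounded, hence cannot lie in any of the bounded filled disks $T^n(\hat{C})$; so the components of $S$ containing $z_1$ and $z_{-1}$ must themselves be non-trivial. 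Carrying out the same Case B dichotomy for these components yields filled bounded invariant disks for the lifts $\hat{f}-(0,1)$ and $\hat{f}+(0,1)$, which have rotation sets $\{0\}\times[-2,0]$ and $\{0\}\times[0,2]$. I then plan to combine the prime-ends rotation-number rigidity furnished by Corollary~\ref{coro:kln} across these three filled-disk constructions, together with the unbounded horizontal displacement hypothesis (which should be incompatible with the presence of a vertical lattice of bounded $\hat{f}$-invariant disks trapping every fixed point of the rational rotation vectors in $\rho(\hat{f})$), to produce the desired contradiction.

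The principal obstacle will be executing this final step: the vertical-translation symmetry gives pairwise disjointness and boundedness almost immediately, but genuinely contradicting the unbounded horizontal displacement requires a careful joint use of the three distinct prime-ends setups coming from $z_0$, $z_1$, and $z_{-1}$, together with the fact that $\pi(z_i)\in\ess(f)$, to rule out the bounded-components configuration globally rather than merely locally.
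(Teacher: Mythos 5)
Your proposal is a genuine attempt to prove the claim from scratch, but it is incomplete, and you say so yourself: the closing contradiction is only sketched as a plan, and you flag ``executing this final step'' as the principal obstacle. The paper's own proof is a one-line citation: since the origin lies on the boundary of the rotation set (indeed, $\rho(\hat f)\subset\{0\}\times\R$), Lemma~4 and Corollary~1 of the cited reference \cite{tal-trans} give $\omega_\pm\neq\emptyset$ directly. So the two approaches are very different in spirit, and yours has not been carried to completion.

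Beyond the acknowledged gap, a few of the steps you do write out are not quite right. First, the normalization ``replace $\hat f$ by $\hat f+(m,0)$'' would translate $\rho(\hat f)$ off the vertical axis and destroy the standing hypothesis; what you actually want is to replace $z_0$ by an integer translate $z_0+(m,0)$, which is also a fixed point of $\hat f$. Second, in Case B you claim $T(C)\neq C$ forces the $T^n(C)$ to be pairwise disjoint; that is false in general, and the correct dichotomy is ``$T^k(C)=C$ for some $k\neq 0$ (whence $C$ is unbounded and you are done)'' versus ``all $T^n(C)$ distinct''. Third, and more seriously for the intended prime-ends argument, the filling $\hat C$ of a bounded continuum need not have nonempty or connected interior (e.g.\ $C$ could be an arc or a point), so ``$U=\operatorname{int}\hat C$ is a bounded open $\hat f$-invariant topological disk'' is not justified; without an open invariant disk you cannot invoke the prime-ends machinery or Corollary~\ref{coro:kln}. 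Even granting all of that, the three filled-disk configurations attached to $z_{-1},z_0,z_1$ live on components of $S$ that are invariant under three \emph{different} lifts ($\hat f+(0,1)$, $\hat f$, $\hat f-(0,1)$), not a single one, so there is no single prime-ends rotation number whose rigidity you can play off against the three fixed points; the contradiction has to be engineered differently, and it is exactly this that you did not supply. The honest route here is to cite the result from \cite{tal-trans} as the paper does, or to prove its relevant special case in full.
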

\begin{proof} Since the origin lies in the boundary of the rotation set, this is a direct consequence of Lemma 4 and Corollary 1 of \cite{tal-trans}.
\end{proof}

The following property holds whenever $f$ is strictly toral and the sets $\omega_-$ and $\omega_+$ are nonempty:
\begin{proposition} $\ess(f) \subset \ol{\pi(\omega_+)}\cap \ol{\pi(\omega_-)}$
\end{proposition}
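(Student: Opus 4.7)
The plan is to prove the $\omega_+$-inclusion (the argument for $\omega_-$ is verbatim, with left/right exchanged) by contradiction. Suppose $x\in\ess(f)$ but $x\notin \ol{\pi(\omega_+)}$. Then there is $\epsilon>0$ with $B_\epsilon(x)\cap \pi(\omega_+) = \emptyset$. Because $\omega_+$ is $\hat{f}$-invariant (Proposition \ref{pro:omegapro}), its projection $\pi(\omega_+)$ is $f$-invariant, so
\[ f^n(B_\epsilon(x))\cap \pi(\omega_+) = f^n\bigl(B_\epsilon(x)\cap \pi(\omega_+)\bigr) = \emptyset \]
for every $n\in\Z$. Consequently the component $V:=U_\epsilon'(x)$ of $\bigcup_n f^n(B_\epsilon(x))$ containing $x$ is entirely disjoint from $\pi(\omega_+)$.

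Now I would invoke strict torality (established earlier in this section) together with Theorem \ref{th:strictly}: since $x\in\ess(f)$, the set $V=U_\epsilon'(x)$ is fully essential, i.e.\ the closed set $K := \T^2\sm V$ is inessential. Lift everything to the universal cover: put $\hat V = \pi^{-1}(V)$, so $\R^2\sm \hat V = \pi^{-1}(K)$ and $\omega_+ \subset \pi^{-1}(K)$. The key ingredient I would use at this step is the standard fact (immediate from the definition of inessentiality via a neighborhood whose inclusion into $\T^2$ is $\pi_1$-trivial on each component) that every connected component of $\pi^{-1}(K)$ is compact in $\R^2$.

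To finish, observe that by Proposition \ref{pro:omegapro} every connected component of $\omega_+$ is unbounded (to the right). Since connectedness forces each component of $\omega_+$ to lie inside a single connected component of $\pi^{-1}(K)$, and the latter is compact, this is a contradiction. Hence $x\in\ol{\pi(\omega_+)}$, and the symmetric argument (using $H^-$ in place of $H^+$ and the fact that components of $\omega_-$ are unbounded to the left) yields $x\in\ol{\pi(\omega_-)}$.

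The only nontrivial ingredient is the lifting fact that an inessential closed subset of $\T^2$ has compact connected preimages in $\R^2$, but this is a basic and well known consequence of the definition of inessentiality used in \cite{kt-ess} and elsewhere in the paper. The rest of the argument is a clean assembly of Proposition \ref{pro:omegapro} (invariance and unboundedness of the components of $\omega_\pm$) with Theorem \ref{th:strictly} (full essentiality of $U_\epsilon'(x)$ at essential points of strictly toral maps); I do not expect any obstacle beyond invoking these facts.
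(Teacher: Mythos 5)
Your proof is correct and is essentially the paper's argument, merely presented contrapositively: the paper shows directly that $\omega_+$ must meet $\pi^{-1}(U_\epsilon'(z))$ (because the complement has bounded components while components of $\omega_+$ are unbounded) and then uses invariance of $\pi(\omega_+)$ to conclude it meets $B_\epsilon(z)$, whereas you assume $B_\epsilon(x)\cap\pi(\omega_+)=\emptyset$, propagate this by invariance to $U_\epsilon'(x)$, and then reach the same bounded-versus-unbounded contradiction. Both rest on the identical ingredients: Theorem~\ref{th:strictly} for full essentiality of $U_\epsilon'$, the lifting fact (\cite[Proposition 1.3]{kt-ess}) that the complement of a fully essential open set has bounded/compact preimage components, and unboundedness of components of $\omega_\pm$ from Proposition~\ref{pro:omegapro}.
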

\begin{proof}
If $z \in \ess(f)$, then by Theorem \ref{th:strictly} we know that $U_\epsilon'(z)$ is fully essential for any $\epsilon>0$. This implies that every connected component of $\pi^{-1}(\T^2\sm U_\epsilon'(z))= \R^2\sm \pi^{-1}(U_\epsilon'(z))$ is bounded \cite[Proposition 1.3]{kt-ess}. Since $\omega_+$ is nonempty and each connected component of $\omega_+$ is unbounded, it follows that $\omega_+\cap \pi^{-1}(U_\epsilon'(z))\neq \emptyset$. In other words, $\pi(\omega_+)$ intersects $U_\epsilon'(z)$. Since $\pi(\omega_+)$ is invariant, the definition of $U_\epsilon'(z)$ implies that $\pi(\omega_+)$ intersects $B_\epsilon(z)$. Since this holds for any $\epsilon>0$, we conclude that $z\in \ol{\pi(\omega_+)}$. The same argument applied to $\omega_-$ in place of $\omega_+$ completes the proof.
\end{proof}

Our assumption about the rotation set allows us to prove more:
\begin{claim}\label{claim:omegasetocam} $\pi(\omega_+)\cap \pi(\omega_-)\neq \emptyset$.
\end{claim}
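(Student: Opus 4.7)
The plan is to prove the stronger statement $\pi(z_0)\in\pi(\omega_+)\cap\pi(\omega_-)$, where $z_0$ is the fixed point of $\hat f$ with $\pi(z_0)\in\ess(f)$ produced in the previous claims. Since the preceding proposition already gives $\pi(z_0)\in\ol{\pi(\omega_+)}\cap\ol{\pi(\omega_-)}$, the task is to upgrade closure to actual containment; I describe the argument for $\pi(z_0)\in\pi(\omega_+)$, the one for $\omega_-$ being symmetric (exchanging left and right throughout).

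Choose $w_n\in\omega_+$ with $\pi(w_n)\to\pi(z_0)$ and write $w_n=z_0+(a_n,b_n)+\delta_n$ with $(a_n,b_n)\in\Z^2$ and $\delta_n\to 0$. Using the strict vertical invariance $\omega_++(0,\Z)=\omega_+$ from Proposition~\ref{pro:omegapro}, the point $w_n-(0,b_n)=z_0+(a_n,0)+\delta_n$ still lies in $\omega_+$, so I may assume $b_n=0$. Passing to subsequences, three cases arise. In case (i), $a_n\equiv a$ is constant, so $w_n\to z_0+(a,0)\in\ol{\omega_+}=\omega_+$, yielding $\pi(z_0)=\pi(z_0+(a,0))\in\pi(\omega_+)$. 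In case (ii), $a_n\to-\infty$, so $-a_n\in\N$ and the right-shift invariance $\omega_++(-a_n,0)\subset\omega_+$ of Proposition~\ref{pro:omegapro} applied to $w_n$ gives $z_0+\delta_n\in\omega_+$, whence $z_0\in\omega_+$ by closedness. The remaining case (iii) is $a_n\to+\infty$, the ``right-drifting'' regime, in which the one-sided horizontal invariance of $\omega_+$ provides no direct means of translating $w_n$ back toward $z_0$.

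The main obstacle is to eliminate case (iii), and the plan is to do so by invoking the essential-point structure. By Theorem~\ref{th:strictly}, the set $U_\epsilon'(\pi(z_0))$ is fully essential for every $\epsilon>0$, and Proposition~1.3 of~\cite{kt-ess} then shows that its preimage $\hat U=\pi^{-1}(U_\epsilon'(\pi(z_0)))\subset\R^2$ is open, $\Z^2$-invariant, $\hat f$-invariant, and connected (since its complement in $\R^2$ has only bounded components, removing a disjoint union of bounded closed sets from $\R^2$ leaves a connected complement). In case (iii), the sequence $w_n$ lies in $\omega_+\cap\hat U$ yet its lifts drift horizontally to $+\infty$ while their projections tend to $\pi(z_0)$. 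The plan is to exploit the connectedness of $\hat U$ together with the closedness and non-separation of $\omega_+$ and the fact that every $\Z^2$-translate $z_0+(k,0)$ is a fixed point of $\hat f$ lying in $\hat U$, in order to produce a new sequence in $\omega_+$ falling under case (i) or case (ii) and converging to $z_0$ or one of its $\Z^2$-translates. This topological reduction is the principal technical hurdle, and it is precisely here that the strictly toral hypothesis and the fully essential nature of $U_\epsilon'(\pi(z_0))$ enter in a decisive way. The symmetric argument, based on the left-shift invariance $\omega_--(a,0)\subset\omega_-$ from Proposition~\ref{pro:omegapro} (with the troublesome mirror case being $a_n\to-\infty$), yields $\pi(z_0)\in\pi(\omega_-)$, completing the proof of the claim.
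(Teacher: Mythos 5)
Your proposal attempts to prove the stronger statement $\pi(z_0)\in\pi(\omega_+)\cap\pi(\omega_-)$ directly, by upgrading the closure relation $\ess(f)\subset\ol{\pi(\omega_+)}\cap\ol{\pi(\omega_-)}$ to actual membership. This is a genuinely different strategy from the paper's, which argues by contradiction: assuming $\pi(\omega_+)\cap\pi(\omega_-)=\emptyset$, the paper places a small ball $B$ around $z_1$ (the point with vertical rotation $+1$), finds translates $\omega_-+(p_1,0)$ and $\omega_++(p_2,0)$ meeting $B$, joins them by a segment $\ell$, and then invokes \cite[Proposition 4.10]{kt-pseudo} to obtain a dichotomy about forward or backward orbits avoiding vertical translates of small balls. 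The crucial ingredients are the ``rotating'' orbits $z_{\pm 1}$ whose forward (resp.\ backward) iterates drift vertically at linear speed, which collide head-on with the conclusion of that dichotomy. None of these ingredients appear in your proposal.

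The problem is that your argument is not complete. You correctly reduce to three cases according to the horizontal drift $a_n$ of the lifts $w_n$ approximating $z_0$ in $\omega_+$. Cases (i) and (ii) are fine (indeed case (ii), $a_n\to-\infty$, is vacuous since $\omega_+\subset H^+$ forces $a_n$ to be bounded below). But case (iii), $a_n\to+\infty$, is precisely the situation where $\pi(\omega_+)$ could fail to be closed, and this is exactly the point of the claim. You yourself write that ``the plan is to exploit the connectedness of $\hat U$\dots in order to produce a new sequence in $\omega_+$ falling under case (i) or (ii),'' and call this ``the principal technical hurdle.'' That is not a proof; it is a statement of the gap. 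Nothing in the proposal explains how the connectedness of $\hat U$, the non-separation of $\omega_+$, and the fixed points $z_0+(k,0)$ would combine to eliminate the right-drifting regime, and it is not clear that they do without the nontrivial translation-type machinery the paper actually uses.

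Two further issues. First, your justification for the connectedness of $\hat U$ --- ``removing a disjoint union of bounded closed sets from $\R^2$ leaves a connected complement'' --- is false as a general topological statement (remove the concentric circles $\{x^2+y^2=n\}$, $n\in\N$, from the plane). The correct reason $\hat U=\pi^{-1}(U'_\epsilon(\pi(z_0)))$ is connected is that $U'_\epsilon(\pi(z_0))$ is connected, open, and fully essential, so the inclusion $U'_\epsilon\hookrightarrow\T^2$ is $\pi_1$-surjective; this is a different argument. Second, note that the paper does eventually prove $\pi(z_i)\in\pi(\omega_+)$ (in the subsection ``Localizing the points $z_i$ in $\omega_+$''), but that proof goes through Lemma~\ref{lem:chain} and Proposition~\ref{pro:cadeia-anular}, and Lemma~\ref{lem:chain} explicitly uses the present claim. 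So attempting to establish $\pi(z_0)\in\pi(\omega_+)$ \emph{before} Claim~\ref{claim:omegasetocam} cannot simply borrow that later argument; you would need a self-contained route, and your proposal does not supply one.
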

\begin{proof}
Suppose for a contradiction that $\pi(\omega_+)\cap \pi(\omega_-)=\emptyset$.
Note that by our assumptions, $\hat{f}(z_1)=z_1+(0,1)$ and $\pi(z_1)\in \ess(f)$. Fix $\epsilon<1/4$ such that $B=B_\epsilon(z_1)$ satisfies $\hat{f}(B)\cap B=\emptyset$. By the previous claim, $\pi(B)$ intersects $\pi(\omega_+)$ and $\pi(\omega_-)$, which means that there exist $p_1, p_2\in \Z$ such that $\omega_-+(p_1,0)$ and $\omega_++(p_2,0)$ both intersect $B$. If $\ell_0\subset B$ is a straight line segment (not including its endpoints) joining a point of $\omega_-+(p_1,0)$ to a point of $\omega_++(p_2,0)$, then one can find a sub-segment $\ell\subset \ell_0$ that has the same property as $\ell_0$ but in addition is disjoint from $\omega_-+(p_1,0)$ and $\omega_++(p_2,0)$. 

This means that the hypotheses of \cite[Proposition 4.10]{kt-pseudo} hold. In particular, part (5) of said proposition implies that one of the following options holds:
\begin{itemize}
\item[(1)] For each $x\notin (\omega_-+(p_1,0)) \cup (\omega_++(p_2,0))$, there is $m_0\in \Z$ and $\delta>0$ such that $f^k(x)$ is disjoint from $B_\delta(x+(0,m))$ whenever $m<m_0$ and $k> 0$, or
\item[(2)] A similar property as (1), with $k<0$ instead  of $k>0$.
\end{itemize}
Suppose first that (1) holds. Note that since $\pi(\omega_+)\cap \pi(\omega_-)=\emptyset$, the point $\pi(z_{-1})$ is disjoint from either $\pi(\omega_+)$ or $\pi(\omega_-)$. Suppose $\pi(z_{-1})\notin \pi(\omega_+)$ (the other case is analogous). Choose $z_{-1}'\in \pi^{-1}(\pi(z_{-1}))$ (\ie an integer translate of $z_{-1}$) such that the first coordinate of $z_{-1}'$ is greater than $p_1$. This choice guarantees that $z_{-1}'\notin \omega_-+(p_1,0)$, and since $\pi(z_{-1}')\notin \pi(\omega_+)$ we also have that $z_{-1}'\notin \omega_++(p_2,0)$. Thus, setting $x=z_{-1}'$ in (1) we conclude that $\hat{f}^k(x)$ is disjoint from $B_\delta(x+(0,m))$ whenever $m\leq m_0$ and $k>0$. This contradicts the fact that $\hat{f}^k(x) = x-(0,k)$ for all $k\in \N$ (since $x$ is an integer translate of $z_{-1}$).

If (2) holds instead, a similar argument using $z_1$ instead of $z_{-1}$ leads to a contradiction. 
Since all cases lead to a contradiction, the proof is complete.
\end{proof}

\subsection{Chains of disks with a special property}
Note that by Claim \ref{claim:omegasetocam}, there exists $m_0$ such that $(\omega_+-(m_0,0))\cap \omega_-\neq \emptyset$, and so a similar property holds if one replaces $m_0$ by any $m\geq m_0$. Let 
$$F_m:=(\omega_+ - (m,0))\cup \omega_{-},$$
and given $\hat{z}\in \R^2\sm F_m$ denote by $O_m(\hat{z})$ the connected component of $\R^2\sm F_m$ containing $\hat{z}$. Recall from \cite[Proposition 4.11]{kt-pseudo} the following facts:
\begin{itemize}
\item  $O_m(\hat{z})$ is an open topological disk for any $m\in \N$, and 
\item  $O_m(\hat{z})\cap (O_m(\hat{z})+(0,k))=\emptyset$ for any integers $k\neq 0$ and $m\geq m_0$.
\end{itemize}
Note that $(F_m)_{m\in \N}$ is an increasing chain of sets, so $(O_m(\hat{z}))_{m\in \N}$ is a decreasing chain of disks. 

\begin{lemma}\label{lem:chain} For any $z\in \T^2\sm \pi(\omega_+)$ there exist $\ol{w}\in \Z^2_*$ and $\hat{z}\in \pi^{-1}(z)$ such that 
\begin{itemize}
\item[(1)] $\hat{z}\notin F_m$ for any $m\in \N$, and
\item[(2)] for any $v\in \Z^2_*$ not parallel to $\ol{w}$, there is $m\geq m_0$ such that $$O_m(\hat{z})\cap (O_m(\hat{z})+v)=\emptyset.$$
\end{itemize}
\end{lemma}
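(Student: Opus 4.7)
For part (1), observe that $\omega_-\subseteq H^-=\{(x,y):x\leq 0\}$, since every $\hat{w}\in\omega_-$ satisfies $\hat{w}=\hat{f}^0(\hat{w})\in H^-$ by definition. Choose a preimage $\hat{z}\in\pi^{-1}(z)$ with $P_1(\hat{z})>0$, which exists because integer translates of any preimage of $z$ are again preimages. Then $\hat{z}\notin\omega_-$. Furthermore, the hypothesis $z\notin\pi(\omega_+)$ implies that every element of $\pi^{-1}(z)$ lies outside $\omega_+$, so for every $m\in\N$ we have $\hat{z}+(m,0)\notin\omega_+$, equivalently $\hat{z}\notin\omega_+-(m,0)$. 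Combining, $\hat{z}\notin F_m$ for every $m\in\N$, which gives (1).

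For part (2), the plan is to analyze the saturation of $O_m(\hat{z})$ under the deck group of $\pi$. For each $m\geq m_0$, let $\til{O}_m$ be the connected component of the $\Z^2$-invariant set $\pi^{-1}(\pi(O_m(\hat{z})))$ containing $\hat{z}$, and define the stabilizer $\Lambda_m=\{v\in\Z^2:\til{O}_m+v=\til{O}_m\}$, which is a subgroup of $\Z^2$. For $v\in\Z^2\setminus\Lambda_m$, the translates $\til{O}_m$ and $\til{O}_m+v$ are distinct connected components of $\pi^{-1}(\pi(O_m(\hat{z})))$ and therefore disjoint, which gives $O_m(\hat{z})\cap(O_m(\hat{z})+v)=\emptyset$ via $O_m(\hat{z})\subseteq \til{O}_m$. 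Since $\pi(O_m(\hat{z}))$ shrinks with $m$, so does $\til{O}_m$, and consequently $\Lambda_m\supseteq\Lambda_{m+1}$; the descending chain condition on subgroups of $\Z^2$ then yields $m_1\geq m_0$ with $\Lambda_m=\Lambda_{m_1}$ for all $m\geq m_1$. Take $\ol{w}$ to be a generator of $\Lambda_{m_1}$ if this subgroup has rank one, and any vector in $\Z^2_*$ if it is trivial; then every $v\in\Z^2_*$ not parallel to $\ol{w}$ lies outside $\Lambda_{m_1}$, and (2) follows with $m=m_1$ provided $\Lambda_{m_1}$ has rank at most one.

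The main difficulty is establishing this rank bound. By the Smith normal form, any rank-two sublattice of $\Z^2$ contains a nonzero vertical vector $(0,k)$, so it suffices to show $(0,k)\notin\Lambda_m$ for some $m$ sufficiently large and every $k\in\N$. If $(0,k)$ did belong to $\Lambda_m$, then $\hat{z}$ and $\hat{z}+(0,k)$ would lie in the same component $\til{O}_m$, producing an arc in $\pi^{-1}(\pi(O_m(\hat{z})))$ joining them, and this arc would project to an essential vertical loop in $\pi(O_m(\hat{z}))\subseteq\T^2$. The plan is to exclude such a loop by combining the disjointness $O_m(\hat{z})\cap(O_m(\hat{z})+(0,k))=\emptyset$ from the bulleted properties of the $O_m$'s with the structural constraints on $\omega_\pm$ from Proposition \ref{pro:omegapro}: by tracking how the lifted arc must cross $\Z^2$-translates of $F_m$, one should derive a contradiction with the separation imposed by $\omega_-$ and $\omega_+-(m,0)$, exploiting that components of $\omega_+$ are unbounded to the right and those of $\omega_-$ are unbounded to the left.
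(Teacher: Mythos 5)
Part (1) matches the paper. Part (2) is where your argument breaks down, and the gap is structural rather than a missing detail.

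The trouble is that you pass from $O_m(\hat z)$ to the connected component $\til O_m$ of the \emph{saturated} set $\pi^{-1}(\pi(O_m(\hat z)))$. That set is the union of all $\Z^2$-translates of $O_m(\hat z)$, and its component $\til O_m$ containing $\hat z$ can be vastly larger than $O_m(\hat z)$ itself. In particular, if $\pi(O_m(\hat z))$ is fully essential (which is exactly the case that matters in the application of Lemma \ref{lem:chain} in the paper: one applies it to $z=\pi(z_i')\in\ess(f)$, so $\pi(O_m(z_i'))$ is fully essential for every $m$), then $\pi(O_m(\hat z))$ is a connected open set whose inclusion into $\T^2$ is $\pi_1$-surjective, so lifting loops of both homology classes $(1,0)$ and $(0,1)$ through $\hat z$ shows $\til O_m+(1,0)=\til O_m$ and $\til O_m+(0,1)=\til O_m$. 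Hence $\Lambda_m=\Z^2$ for every $m$, and your $\Lambda_\infty$ has rank $2$. This is not in conflict with the lemma, because $v\notin\Lambda_m$ is sufficient but far from necessary for $O_m(\hat z)\cap(O_m(\hat z)+v)=\emptyset$: the sets $O_m(\hat z)$ and $O_m(\hat z)+v$ can both sit inside the single component $\til O_m$ while still being disjoint. So the stabilizer $\Lambda_m$ is simply too coarse an invariant, and the reduction ``it suffices to show $(0,k)\notin\Lambda_m$'' is not available. Your final paragraph, which is supposed to supply the rank bound, is in any case only a plan (``one should derive a contradiction''), not a proof, and the preceding paragraph contains a separate error: subgroups of $\Z^2$ do not satisfy the descending chain condition ($\Z^2\supset 2\Z^2\supset 4\Z^2\supset\cdots$), so there is no $m_1$ at which the $\Lambda_m$ stabilize; one would have to work with $\Lambda_\infty=\bigcap_m\Lambda_m$ instead. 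The paper's proof sidesteps all of this by never leaving the sets $O_m$: assuming $O_m$ meets $O_m+u$ and $O_m+v$ for all $m$ with $u,v$ non-parallel, it builds the explicit finite union $W_m=\bigcup_{i=0}^{v_1}(O_m+iu)\cup\bigcup_{j=0}^{u_1}(O_m+jv)$, shows this connected set avoids $F_{m-v_1u_1}$ (hence lies in $O_{m_0}$ for $m=m_0+v_1u_1$), and observes that it meets its own translate by $(0,v_1u_2-u_1v_2)$, contradicting the vertical disjointness of $O_{m_0}$. That quantitative bookkeeping with translates of $F_m$ is exactly the step your sketch gestures at but does not carry out.
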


\begin{proof}

Choose $\hat{z}\in \pi^{-1}(z)$ such that $P_1(\hat{z})>0$. Since $P_1(\omega_-)\subset (-\infty,0]$, it follows that $\hat{z}\notin \omega_-$. Moreover, $\hat{z}\notin \omega_+-(m,0)$ for any $m\in \Z$ since $z=\pi(\hat{z})\notin \pi(\omega^+)$ by hypothesis. In particular, $\hat{z}\notin F_m$ for any $m\in \N$.

For $m\geq m_0$, let $O_m=O_m(\hat{z})$. As we already mentioned, by \cite[Proposition 4.11]{kt-pseudo} we have that $O_m$ is an open topological disk and $O_m\cap (O_m+(0,k))=\emptyset$ for all integers $k\neq 0$. Suppose for a contradiction that (2) does not hold. Then there exist two non-parallel vectors $u, v$ in $\Z^2_*$ such that $O_m\cap (O_m+u)\neq \emptyset$ and $O_m\cap (O_m+v)\neq \emptyset$ for all $m\in \N$. If $u=(u_1,u_2)$ and $v=(v_1,v_2)$, then the previous remarks imply that $u_1\neq 0$ and $v_1\neq 0$. 
We may assume without loss of generality that $u_1>0$ and $v_1>0$ (otherwise we replace $u$ by $-u$ or $v$ by $-v$ as necessary). 

Let $$W_m = \bigcup_{i=0}^{v_1} O_m+iu\cup \bigcup_{j=0}^{u_1} O_m+jv.$$
Note that, by our assumption, $W_m$ is connected. Moreover, $W_m$ contains $\hat{z}+v_1u$ and $\hat{z}+u_1v$, and so $W_m$ intersects $W_m+(v_1u-u_1v) = W_m+(0,k)$, where $k=v_1u_2-u_1v_2\in \Z$. The fact that $u$ and $v$ are not parallel implies that $k\neq 0$.

On the other hand, since $O_m$ is disjoint from $F_m$, we have that for $i\in \N$ the set $O_m+iu$ is disjoint from $$F_m+iu = F_m+(iu_1, 0) = \big(\omega_-+(iu_1,0)\big)\cup \big(\omega_++(iu_1-m,0)\big),$$
where we used the fact that $F_m+(0,n)=F_m$ for any $n\in \Z$. Moreover, since $\omega_-\subset \omega_- +(iu_1,0)$ (because $iu_1>0$), we conclude that $O_m+iu$ is also disjoint from $F_{m-iu_1} = \omega_-\cup(\omega_++(iu_1-m,0))$.

In particular, if $0\leq i\leq v_1$, we have that $\omega_+ + (v_1u_1-m,0)\subset \omega_++(iu_1-m,0)$, so that $F_{m-v_1u_1}\subset F_{m-iu_1}$. Hence $O_m+iu$ is also disjoint from $F_{m-v_1u_1}$. A similar argument shows that $O_m+jv$ is disjoint from $F_{m-v_1u_1}$ if $0\leq j\leq u_1$. 

Thus we conclude that $W_m\subset \R^2\sm F_{m-v_1u_1}$. In particular, using $m = m_0+v_1u_1$ we see that $W_m\subset \R^2\sm F_{m_0}$.  Since $W_{m}$ is connected and contains $\hat{z}$, the definition of $O_{m_0}$ implies that $W_m\subset O_{m_0}$. But we have shown that $W_m$ intersects $W_m+(0,k)$ where $k=v_1u_2-u_1v_2\neq 0$, while $O_{m_0}$ is disjoint from $O_{m_0}+(0,k)$. This contradiction proves our claim.
\end{proof}

Using the terminology of \cite[Sec. 3]{kt-pseudo}, the Lemma \ref{lem:chain} says that $(O_m(\hat{z}))_{m\in \N}$ is an eventually $\Z^2\sm \R \ol{w}$-free chain of open topological disks. 

\subsection{Localizing the points $z_i$ in $\omega_+$}
Let us state a general fact \cite[Proposition 3.3]{kt-pseudo}. The only hypothesis used in this proposition is that $\hat{f}$ is a lift of a homeomorphism homotopic to the identity.
\begin{proposition}\label{pro:cadeia-anular} Suppose that for some $\ol{w}\in \Z^2_*$ there exists an eventually $\Z^2\sm \R\ol{w}$-free decreasing chain of open connected sets $(O_m)_{m\in \N}$ such that $\hat{f}(O_m)=O_m$ and $\pi(O_m)$ is fully essential for each $m\in \N$. Then $\hat{f}$ has uniformly bounded deviations in some direction, \ie there exists $w\in \R^2_*$ and $M>0$ such that 
$$\abs{\smash{p_w(\hat{f}^n(z)-z)}} \leq M\quad \text{for all } n\in \Z,\, z\in \R^2.$$
\end{proposition}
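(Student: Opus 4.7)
The plan is to show that $\hat{f}$ has uniformly bounded deviations in the direction $w := \ol{w}^{\perp}$, by exhibiting an $\hat{f}$-invariant layered decomposition of $\R^2$ into strips of bounded $w$-width.

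First, I extract structural consequences of the hypotheses. By the $\Z^2$-equivariance of $\hat{f}$ (as a lift of an identity-isotopic map), every translate $O_m + v$ is itself $\hat{f}$-invariant. The eventually $\Z^2 \sm \R\ol{w}$-free condition says that for each $v \in \Z^2 \sm \R\ol{w}$ there exists $m_v$ with $O_m \cap (O_m+v) = \emptyset$ for all $m \geq m_v$. The full essentiality of $\pi(O_m)$, together with \cite[Proposition 1.3]{kt-ess}, forces every connected component of $\R^2 \sm \pi^{-1}(\pi(O_m))$ to be bounded. Replacing $O_m$ by its ``filled'' version $\til O_m$ (union with bounded complementary regions of $O_m$), we may assume that $O_m$ is simply connected without affecting either hypothesis.

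Second, and this is the core step, I would show that for some $m$ the set $O_m$ is contained in a strip $S_m := \{z \in \R^2 : |p_w(z) - c_m| \leq R_m\}$ of bounded $w$-width $2R_m$. The heuristic is that disjointness of the translates $O_m + v$ for $v \notin \R\ol{w}$ forces them to stack transversely to $\ol{w}$ without overlap, while the bounded-complement condition forces the union $\bigcup_v (O_m + v)$ to cover $\R^2$ up to bounded gaps. A concrete realization descends to the cylinder $C := \R^2/\langle v_0 \rangle$ with $v_0$ a primitive integer vector parallel to $\ol{w}$, and uses that disjoint essential open annuli in a cylinder are linearly ordered along its axis; combined with the bounded-complement condition, this bounds the $w$-extent of $O_m$.

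Finally, I assemble the conclusion. Each translate $O_m + v \subset S_m + v$ is $\hat{f}$-invariant, so any orbit starting in some $O_m + v$ satisfies $|p_w(\hat{f}^n(z)-z)| \leq 2R_m$. For $z$ in a bounded complementary component $K$ of $\R^2 \sm \pi^{-1}(\pi(O_m))$, one notes that $K$ must sit in a ``gap'' between two consecutive invariant strips $O_m + v'$ and $O_m + v''$: a simply connected open subset of $\R^2$ has no bounded complementary components individually, so bounded components only arise from being trapped between several translates. Since $\hat{f}$ preserves each bounding strip and is orientation-preserving, it preserves the gap region between them, so the orbit of $z$ remains in this gap, giving a $w$-bound controlled by the gap width. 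Taking the maximum of the two bounds yields the uniform constant $M$. The main obstacle will be the second step: rigorously proving the strip containment of $O_m$. For any fixed finite $m$, the disjointness of translates holds only for some $v \in \Z^2 \sm \R\ol{w}$, not all of them; combining this partial disjointness with the inessential-complement hypothesis to bound the $w$-extent requires the ``annular chain'' topological analysis developed in \cite{kt-pseudo}.
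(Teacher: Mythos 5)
This proposition is not proved in the paper: it is quoted verbatim as \cite[Proposition 3.3]{kt-pseudo} and used as a black box, so there is no internal argument to compare your attempt against. Evaluating the attempt on its own merits, it contains a genuine gap, which you yourself flag in your final sentence. Your plan hinges on finding a \emph{single} $m$ for which $O_m$ lies inside a strip of bounded $p_w$-extent. But the ``eventually $\Z^2\sm\R\ol{w}$-free'' hypothesis only provides, for each $v\in\Z^2\sm\R\ol{w}$, an index $m_v$ beyond which $O_m\cap(O_m+v)=\emptyset$; for any fixed $m$ this disjointness may fail for infinitely many $v$, and even a connected set disjoint from its $(1,0)$-translate (taking $\ol{w}=(0,1)$) can still be unbounded in the $x$-direction, e.g.\ a thin tube wandering back and forth. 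So no individual term of the chain need be contained in a strip, and the argument must genuinely exploit the decreasing chain as a whole. Deferring this exact point to ``the annular chain topological analysis developed in \cite{kt-pseudo}'' is circular, since that analysis \emph{is} the content of the proposition you were asked to prove.

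Two secondary points are also unjustified. First, the filling step: you assert that replacing $O_m$ by $\til O_m$ preserves both hypotheses, but filling can in principle create intersections with translates that were absent before ($O_m+v$ might lie inside a bounded complementary component of $O_m$), so the freeness hypothesis has to be re-established rather than assumed. Second, the concluding gap-trapping argument is not tight: \cite[Proposition 1.3]{kt-ess} gives that every component of $\R^2\sm\pi^{-1}(\pi(O_m))$ is bounded, not that these components are \emph{uniformly} bounded, nor that their $p_w$-extent is controlled by that of $O_m$. Without such a uniform bound, ``the orbit of $z$ remains in this gap, giving a $w$-bound controlled by the gap width'' does not produce a single constant $M$ valid for all $z\in\R^2$.
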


In our current setting, this allows us to obtain the following:

\begin{claim} $\pi(z_i)\in \pi(\omega_+)$ for each $i\in \{-1,0,1\}$.
\end{claim}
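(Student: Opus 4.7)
The plan is to argue by contradiction: assume $\pi(z_i)\notin \pi(\omega_+)$ for some $i\in\{-1,0,1\}$, and derive uniformly bounded horizontal deviations for $\hat{f}$, contradicting the standing assumption of the section. The first step is to apply Lemma \ref{lem:chain} to $\pi(z_i)$, producing $\ol{w}\in \Z^2_*$ and a lift $\hat{z}\in \pi^{-1}(\pi(z_i))$ for which $(O_m(\hat{z}))_{m\geq m_0}$ is an eventually $\Z^2\sm \R\ol{w}$-free decreasing chain of open topological disks. The goal is then to feed this chain into Proposition \ref{pro:cadeia-anular}.

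The delicate point, and the main obstacle I foresee, is that invariance of $O_m(\hat{z})$ under $\hat{f}$ fails when $i\neq 0$: since $\hat{f}(\hat{z})=\hat{z}+(0,i)$, the disjointness statement from \cite[Proposition 4.11]{kt-pseudo} puts $\hat{f}(\hat{z})$ in a \emph{different} component of $\R^2\sm F_m$. I would circumvent this by passing to the lift $\hat{g}=\hat{f}-(0,i)$, which fixes $z_i$. Since $H^\pm$ are invariant under vertical translation, a short induction yields $\hat{g}^n(H^\pm)=\hat{f}^n(H^\pm)$ for every $n$, so the sets $\omega_\pm$, $F_m$, and $O_m(\hat{z})$ are insensitive to this change of lift. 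Because $\hat{g}$ commutes with integer translations and $\hat{z}-z_i\in \Z^2$, $\hat{z}$ is also fixed by $\hat{g}$; and as $\hat{g}$ preserves $F_m$ and permutes its complementary components, it must fix the component $O_m(\hat{z})$. To verify the remaining hypothesis that $\pi(O_m(\hat{z}))$ is fully essential, I would note that this set is $f$-invariant, open, and contains a small Euclidean ball around $\pi(z_i)\in \ess(f)$; hence it contains $U_\epsilon'(\pi(z_i))$ for suitable $\epsilon>0$, which is fully essential by Theorem \ref{th:strictly}.

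Applying Proposition \ref{pro:cadeia-anular} to $\hat{g}$ and the chain $(O_m(\hat{z}))_{m\geq m_0}$, we obtain $w\in \R^2_*$ and $M>0$ with $\abs{p_w(\hat{g}^n(z)-z)}\leq M$ for every $n\in \Z$ and $z\in \R^2$. This forces $\rho(\hat{g})\subset w^\perp$; but $\rho(\hat{g})=\rho(\hat{f})-(0,i)$ is still a nondegenerate vertical segment, so $w$ must be horizontal. Taking $w=(1,0)$ and observing that $P_1(\hat{g}^n(z)-z)=P_1(\hat{f}^n(z)-z)$, we conclude that $\hat{f}$ has uniformly bounded horizontal deviations, contradicting the standing assumption and finishing the proof.
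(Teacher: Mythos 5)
Your proof is correct and follows essentially the same route as the paper: both pass to the lift $\hat{f}-(0,i)$ so that $z_i$ becomes fixed (observing that $\omega_\pm$, $F_m$, and $O_m(\hat z)$ are unaffected by this change), then feed the chain from Lemma~\ref{lem:chain} into Proposition~\ref{pro:cadeia-anular}, invoke Theorem~\ref{th:strictly} via $\pi(z_i)\in\ess(f)$ for full essentiality of $\pi(O_m(\hat z))$, and reach a contradiction with the standing unbounded-horizontal-deviation assumption. You spell out a couple of steps in more detail than the paper (why $\hat g$ fixes the component $O_m(\hat z)$, why the bounded direction produced by Proposition~\ref{pro:cadeia-anular} must be horizontal), but the structure and all the key ingredients are identical.
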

\begin{proof}
Fix $i\in \{-1,0,1\}$. We will assume that $z_i$ is fixed for $\hat{f}$. Otherwise, we may replace $\hat{f}$ by the new lift $\hat{f}-(0,i)$ of $f$, which fixes $z_i$, and proceed with the same proof (note that this leaves the sets $\omega_\pm$ unchanged).

Assume for contradiction that $\pi(z_i)\notin \pi(\omega_+)$. Then Lemma \ref{lem:chain} implies that there is an integer translate $z_i'$ of $z_i$ and some $\ol{w}\in \Z^2_*$ such that $z_i'\notin F_m$ for all $m\in \N$ and $(O_m(z_i'))_{m\in \N}$ is an eventually $\Z^2\sm \ol{w}\Z$-free chain of disks. Moreover, as we already saw, $\pi(z_i')\in \ess(f)$, so that $\pi(O_m(z_i'))$ is fully essential, for each $m\in \N$. In addition, $O_m(z_i')$ is invariant, since it is a connected component of the complement of an invariant set containing $z_i'$, which is fixed by $\hat{f}$. Thus the hypotheses of Proposition \ref{pro:cadeia-anular} hold, and we conclude that $\hat{f}$ has uniformly bounded deviations in some direction. This is not possible due to the fact that $\rho(\hat{f})$ is a nondegenerate vertical interval (which implies unbounded deviations in all non-horizontal directions) and by our assumption the deviations in the horizontal direction are not uniformly bounded. This contradiction proves the claim.

%
%
\end{proof}

\subsection{The dynamics on the cylinder}\label{sec:cylinder}
Denote by $T_1, T_2$ the translations $(x,y)\mapsto (x+1,y)$ and $(x,y)\mapsto (x,y+1)$, respectively.
We will consider the cylinder $\til{\A} = \R^2/\langle T_2\rangle \simeq \R\times \T^1$. Let $\tau\colon \R^2\to \til{\A}$ be the covering projection. The covering $\pi\colon \R^2 \to \T^2$ also induces a covering $\til{\pi}\colon \til{\A}\to \T^2$ naturally, and the lift $\hat{f}$ induces a lift $\til{f}$ of $f$ such that the following diagram commutes. 
\SelectTips{xy}{12}
\[
\xymatrix{ 
 & \R^2\ar^{\hat{f}}[rr]\ar_\tau[ld]\ar'[d][dd] && \R^2\ar_\tau[ld]\ar^\pi[dd] \\
\til{\A}\ar^{\quad\quad\til{f}}[rr]\ar_{\til{\pi}}[rd]  && \til{\A}\ar^{\til{\pi}}[rd]\\
& \T^2\ar^f[rr] && \T^2}
\]

Consider the set $\til{\omega}_+ = \tau(\omega_+)$. Then $\til{\omega}_+$ is $\til{f}$-invariant, and every connected component of $\til{\omega}_+$ is unbounded to the right (and bounded to the left) in $\til{\A}$. Moreover, the fact that $\omega_+$ is closed and $T_2$-invariant implies that $\til{\omega}_+$ is closed in $\til{\A}$.

Let $\til{\A}_* = \til{\A}\sqcup \{L_\infty\}$ the space obtained by adding to $\til{\A}$ one of its topological ends (the one on the left, $L_\infty$) and topologized so that $\til{\A}_*\simeq \R^2$. This is formally done by letting $L_\infty$ represent an arbitrary new point, and defining an open set of $\til{\A}_*$ to be any union of open sets of $\til{\A}$ with sets of the form $\{(x,y)\in \til{\A}:x<a\}\sqcup\{L_\infty\}$ for some $a\in \R$.  

Being homotopic to the identity, the map $\til{f}$ leaves the two ends of $\til{\A}$ invariant, and in particular $\til{f}$ extends to a map $\til{f}_*$ of $\til{\A}_*$ by fixing the point $L_\infty$. 
Let $U_*$ be the connected component of $\til{\A}_*\sm \til{\omega_+}$ which contains $L_\infty$. Then $U_*$ is an open $\til{f}_*$-invariant set, and $U_*$ is also simply connected (since $\til{\A}_*\simeq \R^2$ and every connected component of the complement of $U_*$ is unbounded). 

Moreover, $U_*$ itself is unbounded in $\til{\A}_*$. Indeed, in the case that $U_*$ is bounded, it follows  that $U=U_*\sm \{L_\infty\}$ is bounded to the right in $\til{\A}$, and so $\tau^{-1}(U)$ is bounded to the right in $\R^2$. Thus, if $H^-$ denotes the half-plane $\{(x,y):x\leq 0\}$ in $\R^2$, then $\hat{U}=\tau^{-1}(U)$ is an open $\hat{f}$-invariant set such that $\hat{U}\subset H^- + (m,0)$ for some $m\in \N$. Furthermore, $H^--(1,0)\subset \hat{U}$, due to the definition of $\omega_+$ and $U_*$. Thus $H^--(1,0)\subset \hat{U}\subset H^-+(m,0)$, and this easily implies that $\hat{f}$ has uniformly bounded displacement in the horizontal direction (see for instance \cite[Proposition 1.5]{kt-ess}), which is not possible under our assumptions.

Thus $\til{f}_*$ is a homeomorphism of a plane, preserving orientation (since $\til{f}$ is homotopic to the identity) and leaving and unbounded open topological disk $U_*$ invariant.

Note that in the previous section we showed that $\pi(z_i)\in \pi(\omega_+)$, so in particular replacing $z_i$ by some integer translate we may assume that $z_i\in \omega_+$ for each $i\in \{-1,0,1\}$. This means that if $\til{z}_i=\pi(z_i)$, then $\til{z}_i\in \til{\A}_*\sm U_*$. Recall that $\hat{f}(z_i)=z_i+(0,i)$, so that the rotation number of the fixed point $\til{z}_i$ of $\til{f}$ relative to the lift $\hat{f}$ is $i$ (see the comments before Corollary \ref{coro:kln}, observing that the cylinder $\A$ used there is vertical instead of horizontal).

We would like to apply Corollary \ref{th:kln} to $\til{f}_*$ and $U_*$. To meet the hypotheses of the corollary, we first need to verify that $\til{f}_*$ is nonwandering (at least in $U_*$). 

\begin{claim}\label{claim:nw} $\til{f}$ is nonwandering in $\til{\A}$.
\end{claim}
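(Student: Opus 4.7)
The plan is to combine Poincar\'e recurrence for $f$ on $\T^2$ with Atkinson's recurrence theorem applied to the horizontal displacement cocycle, so as to produce returns of open sets under $\til f$ that involve no horizontal integer translation.

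First I would define the horizontal displacement cocycle $\phi\colon\T^2\to\R$ by $\phi(x) = P_1(\hat f(\hat x)-\hat x)$, which is well-defined and continuous thanks to the $\Z^2$-equivariance of $\hat f$. Since $\rho(\hat f) = \{0\}\times[a,b]$ and, by Misiurewicz--Ziemian, the rotation vector of any $f$-invariant Borel probability measure $\nu$ on $\T^2$ lies in $\rho(\hat f)$, one has $\int\phi\,d\nu = 0$ for every such $\nu$. In particular, decomposing $\mu$ into ergodic components $\mu_\alpha$, each satisfies $\int\phi\,d\mu_\alpha = 0$, and applying Atkinson's theorem componentwise then yields that the skew product $F(x,t) = (f(x),\,t+\phi(x))$ on $\T^2\times\R$ is conservative with respect to the infinite invariant measure $\mu\times\Leb$.

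Given an arbitrary nonempty open $\til V\subset\til\A$, I would choose a ball $\hat V = B_\epsilon(\hat z_0)\subset\R^2$ with $\epsilon<1/4$ such that $\tau(\hat V)\subset\til V$, and set $V = \pi(\hat V)\subset\T^2$. Conservativity of $F$ applied to the positive-measure set $V\times(-\epsilon,\epsilon)$ would produce $(x,t)$ in this set and some $n\geq 1$ with $F^n(x,t)$ again in it, giving simultaneously $f^n(x)\in V$ and $|S_n\phi(x)|<2\epsilon$. Lifting $x$ to $\hat x\in\hat V$, the disjointness of integer translates of $\hat V$ yields a unique $(p,q)\in\Z^2$ with $\hat f^n(\hat x)\in\hat V + (p,q)$, and comparing horizontal coordinates using $P_1(\hat f^n(\hat x)-\hat x) = S_n\phi(x)$ would force $|p|<4\epsilon<1$, hence $p=0$. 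Projecting by $\tau$ then gives $\til f^n(\tau(\hat x))\in\tau(\hat V + (0,q)) = \tau(\hat V)\subset\til V$, which is the required return.

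The main technical obstacle is the invocation of Atkinson's theorem without assuming that $\mu$ is ergodic; this is circumvented by passing to the ergodic decomposition and using the Misiurewicz--Ziemian characterization of rotation vectors of invariant measures, which forces the horizontal cocycle to have vanishing mean on every ergodic component, so that conservativity of the skew product holds component by component and hence globally.
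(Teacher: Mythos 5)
Your proof is correct, but it takes a genuinely different route from the one in the paper. The paper argues directly: assuming a wandering open set $V$ exists, the pairwise disjointness of its iterates forces the measure of $\bigcup_{i=1}^n \til{f}^i(V)$ to grow linearly in $n$, which, compared against the measure of cylinders $\{-k\leq x<k\}\times\T^1$, produces a point whose horizontal displacement grows at a positive linear rate, yielding a rotation vector with nonzero first coordinate---contradicting $\rho(\hat f)=\{0\}\times[a,b]$. You instead package the same input (that the horizontal displacement cocycle $\phi$ has zero mean against every $f$-invariant measure, because $P_1(\rho(\hat f))=\{0\}$ and rotation vectors of invariant measures lie in $\rho(\hat f)$) and feed it into Atkinson's recurrence theorem, after passing to the ergodic decomposition to dispose of the non-ergodicity of $\mu$; conservativity of the skew product $F(x,t)=(f(x),t+\phi(x))$ then produces returns of small balls with sub-unit horizontal drift, hence $p=0$ and a genuine return in $\til{\A}$. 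The bookkeeping in your last step is sound: with $\epsilon<1/4$ and $|t|,|t+S_n\phi(x)|<\epsilon$ you get $|S_n\phi(x)|<2\epsilon$, and comparing with the integer horizontal translate $(p,q)$ identified by the return into $\hat V+(p,q)$ gives $|p|<4\epsilon<1$. Your route is cleaner conceptually, at the cost of invoking Atkinson as a black box and the ergodic-decomposition reduction; the paper's argument is more elementary and self-contained, and in effect reproves from scratch the particular recurrence statement that Atkinson would give here. Both proofs ultimately hinge on the same geometric fact, that the rotation set has zero first coordinate.
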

\begin{proof}
Recall that we are assuming that $f$ preserves a Borel probability measure of full support $\mu$ on $\T^2$. There is a corresponding lifted (non-finite) $\hat{f}$-invariant measure $\hat{\mu}$ on $\R^2$ which is invariant by $\Z^2$-translations, positive on open sets, and finite on compact sets ($\hat{\mu}$ satisfies $\hat{\mu}(E) = \mu(\pi(E))$ for any Borel set $E$ such that $\pi|_E$ is injective). This induces a measure $\til{\mu}$ on $\til{\A}$ which is also $\til{f}$-invariant, finite on compact sets, positive on open sets and $\til{T}_1$-invariant. 

Suppose there is an open set $V\subset \til{\A}$ which is wandering for $\til{f}$. By choosing a relatively compact open subset of $V$, we may assume that $\til{\mu}(V)<\infty$. Moreover, we may assume that $V\subset Q$, where $Q=\{(x,y)\in \til{\A}:0\leq x<1\}$, by using a suitable $\til{T}_1$-translation of $V$ intersected with $Q$.
Letting $C_k = \{(x,y)\in \til{\A} : -k\leq x< k\}$, we have that $\til{\mu}(C_k) = 2k\til{\mu}(Q)$. On the other hand, the $\til{\mu}$-measure of $V_n = \bigcup_{i=1}^n \til{f}^i(V)$ is $n\til{\mu}(V)$. 

Let $r = \til{\mu}(V)/(2\til{\mu}(Q))$, and let $k_n$ be the integer such that $nr-1\leq k_n< nr$. Then $\til{\mu}(C_{k_n}) < \mu(V_n)$, and so there is a point of $V_n$ in the complement of $C_{k_n}$. This means that there exists $z_n\in V$ and an integer $1\leq i_n\leq n$ such that $\til{f}^{i_n}(z_n)\notin C_{k_n}$, which implies that $$\abs{\smash{\til{P}_1(\til{f}^{i_n}(z_n))-\til{P}_1(z_n)}} \geq k_n-1,$$ where $\til{P}_1\colon \til{\A}\to \R$ is the projection onto the first coordinate.

Letting $\hat{z}_n$ be an element of $\til{\pi}^{-1}(z_n)$ in $[0,1]^2$, we see that $$\abs{P_1\bigg(\frac{\hat{f}^{i_n}(\til{z}_n)-\til{z}_n}{i_n}\bigg)} \geq \frac{k_n-1}{i_n} \geq \frac{nr-2}{n} \xrightarrow[n\to \infty]{} r.$$
Choosing subsequences, we may assume that $(\hat{f}^{i_n}(\til{z}_n)-\til{z}_n)/i_n$ converges to some $v\in \R^2$, which by definition must be in $\rho(\hat{f})$. Our previous observation implies that the first coordinate of $v$ has modulus at least $r$. In particular, the first coordinate of $v$ is nonzero, contradicting the fact that $\rho(\hat{f})$ is a vertical interval containing the origin.
\end{proof}

The previous claim implies that $\til{f}_*$ is nonwandering in $U_*$, so we are under the hypotheses of Corollary \ref{coro:kln}, using $\til{f}_*$ as the map $f$, and $L_\infty$ as $z_0$. Since we already saw that $U_*$ is unbounded, we conclude that all periodic points of $\til{f}_*$ in $\til{A}_*\sm U_*$ have the same rotation number relative to the lift $\hat{f}$. This contradicts the fact that $\til{z}_0$ and $\til{z}_1$ are fixed points of $\til{f}$ in $\til{\omega}_+\subset \til{A}_*\sm U_*$ with respective rotation numbers $0$ and $1$. This contradiction completes the proof of Proposition \ref{pro:main-A}.
\qed
\begin{remark} The only point of the proof where the existence of an invariant measure of full support is essentially used is in the proof of Claim \ref{claim:nw}. For all other proofs in this article it suffices to assume that $f$ is nonwandering.
\end{remark}

\section{A realization theorem for fully essential invariant continua}
This result improves Theorem D of \cite{kt-ess}.

\begin{theorem}\label{th:reali-full} Let $f$ be a nonwandering homeomorphism homotopic to the identity, $\hat{f}$ a lift of $f$ to $\R^2$, and $K\subset \T^2$ a fully essential $f$-invariant continuum. Given $v\in \Z^2$ and $q\in \N$, if there exists $z\in \R^2$ such that $\hat{f}^q(z')=z'+v$, then there also exists $z'\in \R^2$ such that $\pi(z')\in K$ and $\hat{f}^q(z')=z'+v$. 
\end{theorem}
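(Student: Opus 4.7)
The plan is to argue by contradiction. Set $g=f^q$ and $\hat{g}=\hat{f}^q-v$, so that $\hat{g}$ is a lift of $g$; the map $g$ is nonwandering by Proposition~\ref{pro:nw-power}, and by hypothesis $\hat{g}$ has at least one fixed point. Assume toward a contradiction that $\fix(\hat{g})\cap\pi^{-1}(K)=\emptyset$, so every fixed point of $\hat{g}$ lies in $\pi^{-1}(U)$ where $U=\T^2\setminus K$.

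Since $K$ is fully essential, $U$ is inessential, every component $D$ of $U$ is inessential, and by \cite[Proposition 1.3]{kt-ess} each connected component of $\pi^{-1}(U)$ is a bounded open set projecting homeomorphically onto a component of $U$; in addition $\pi^{-1}(K)$ is a connected unbounded closed subset of $\R^2$. Pick a fixed point $z$ of $\hat{g}$ and let $\hat{D}$ be the bounded component of $\pi^{-1}(U)$ containing $z$. Since $\hat{g}$ permutes the components of $\pi^{-1}(U)$ and fixes $z\in\hat{D}$, we have $\hat{g}(\hat{D})=\hat{D}$. I claim moreover that $\hat{D}$ is simply connected: $\partial\hat{D}\subset\pi^{-1}(K)$, and since $\pi^{-1}(K)$ is connected and unbounded it must sit entirely inside the unbounded component of $\R^2\setminus\hat{D}$; hence any bounded component $C$ of $\R^2\setminus\hat{D}$ would have frontier $\partial C\subset\partial\hat{D}\subset\pi^{-1}(K)$ and yet satisfy $C\cap\pi^{-1}(K)=\emptyset$, forcing $\partial C=\emptyset$ and thus $C=\emptyset$. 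The restriction $\hat{g}|_{\hat{D}}$ is conjugate via the homeomorphism $\pi|_{\hat{D}}$ to the map $g|_{\pi(\hat{D})}$, which is nonwandering because $g$ is nonwandering on $\T^2$ and $\pi(\hat{D})$ is open and $g$-invariant; hence $\hat{g}|_{\hat{D}}$ is nonwandering.

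The proof thus reduces to deriving a contradiction from the following configuration: $\hat{D}\subset\R^2\setminus\pi^{-1}(K)$ is a bounded invariant topological disk on which $\hat{g}$ acts as a nonwandering orientation-preserving homeomorphism, with an interior fixed point at $z$ but no fixed points on $\partial\hat{D}\subset\pi^{-1}(K)$. Extracting the contradiction is the main obstacle. The intended route is to combine the prime-ends machinery of Theorem~\ref{th:kln} and Corollary~\ref{coro:kln} with the $\Z^2$-equivariance of $\hat{g}$: Corollary~\ref{coro:kln} applied to $\hat{D}$ with $z$ playing the role of the base point $z_0$ for the annular lift forces all periodic points of $\hat{g}$ in $\partial\hat{D}$ to share a single rotation number; combining this with the global nonwandering dynamics on $\T^2$ and a Franks-type realization argument (cf.\ Theorem~\ref{th:reali}) applied to suitable invariant sub-dynamics, one should produce a periodic point of $\hat{g}$ in $\partial\hat{D}$ whose rotation number is different, yielding the contradiction. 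Such a periodic point lies in $\pi^{-1}(K)\cap\partial\hat{D}$, and a standard $\Z^2$-equivariant adjustment of $q$ and $v$ then delivers an element of $\fix(\hat{g})\cap\pi^{-1}(K)$, contradicting the standing assumption.
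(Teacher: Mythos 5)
Your reduction to a bounded invariant topological disk $\hat{D}\subset\R^2$ with $\hat{g}|_{\hat{D}}$ nonwandering and no fixed points on $\partial\hat{D}$ is correctly carried out, but you stop exactly at the crucial point and only sketch an ``intended route''; as written the proposal is not a proof. Moreover, the route you sketch is unlikely to close: the rotation set $\rho(\hat{g})$ must contain $(0,0)$ (since $\hat{g}$ has a fixed point), but there is nothing in the hypotheses forcing it to be nondegenerate, so Theorem~\ref{th:reali} gives you no second periodic point with a different rotation number to feed into Corollary~\ref{coro:kln}. Corollary~\ref{coro:kln} tells you that all periodic points of $\hat{g}$ on $\partial\hat{D}$ share one rotation number, which is consistent with there being \emph{no} periodic points there at all — that is no contradiction. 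A deeper structural issue is that by working directly in $\R^2$ you have traded away the one global constraint that makes the argument work: the plane is not compact, so there is no Lefschetz bound on the total fixed point index of $\hat{g}$.

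The paper takes a different and, in the end, simpler route. It first passes to a \emph{finite torus cover} $\til{\T}^2=\R^2/(m\Z^2)$ with $m$ chosen large enough that $m\Z^2\cap\rho(\hat g)=\{(0,0)\}$, so that $z$ is fixed by $\hat g$ iff its image $\tau(z)\in\til{\T}^2$ is fixed by the induced map $\til g$; it checks that $\til K=\til\pi^{-1}(K)$ is still a fully essential continuum and that $\til g$ is nonwandering (Proposition~\ref{pro:nw-lift}). Assuming $\fix(\til g)\cap\til K=\emptyset$, the components of $\til{\T}^2\sm\til K$ are inessential disks covering the compact set $\fix(\til g)$, so finitely many invariant such disks $U_1,\dots,U_k$ cover it. For each $U_i$, nonwanderingness of $\til g|_{U_i}$ plus the absence of fixed points on $\partial U_i\subset\til K$ yields, via the Cartwright--Littlewood prime ends argument (as in Proposition~4.2 of \cite{koro}), a closed disk $D_i\subset U_i$ of fixed point index exactly $1$. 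Summing gives Lefschetz index $k\ge1$ for $\til g$, contradicting the Lefschetz number $0$ of a homeomorphism of a torus homotopic to the identity. It is this global compact-surface index constraint — unavailable in your $\R^2$ setting — that finishes the proof, and it is the step your proposal is missing.
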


%

\begin{proof}
\setcounter{claim}{0}
Let $\hat{g}=\hat{f}^q-v$, so that $\hat{g}$ is a lift of $g = f^q$ and by hypothesis $\hat{g}$ has at least one fixed point. We would like to show that $\hat{g}$ has a fixed point $z'$ such that $\pi(z')\in K$.

Fix $m\in \N$ so large that the only element of $m\Z^2\cap \rho(\hat{g}) = \{(0,0)\}$ (the origin must belong to $\rho(\hat{g})$ since $\hat{g}$ has a fixed point). Consider the map $\til{g}$ induced by $\hat{g}$ on the torus $\til{\T}^2=\R^2/(m\Z^2)$. Denote by $\tau\colon \R^2\to \R^2/(m\Z^2)$ the quotient projection. There is a finite covering $\til{\pi}\colon \til{\T}^2\to \T^2$ defined by the projection $\til{\T}^2\to \til{\T}^2/(\Z^2/m\Z^2) \simeq \T^2$, and $\til{g}$ is a lift of $g$ by $\til{\pi}$. 

Note that $z\in \R^2$ is a fixed point of $\hat{g}$ if and only if $\tau(z)$ is a fixed point of $\til{g}$. Thus, to prove the theorem it suffices to show that $\til{g}$ has a fixed point in the compact $\til{g}$-invariant set $\til{K}=\til{\pi}^{-1}(K)$. 

Observe that $\til{K}$ is a fully essential continuum in $\til{\T}^2$. Indeed, suppose that some connected component $U$ of $\til{\T}^2\sm \til{K}$ is essential in $\til{\T}^2$. Since the map induced by $\til{\pi}$ in the fundamental groups is injective (explicitly, it is the inclusion $m\Z^2\to \Z^2$), it follows that $\til{\pi}(U)$ is connected and essential in $\T^2$. But since $\til{\pi}(U)$ is disjoint from $K$, this contradicts the fact that $K$ is fully essential. 

We need to show that $\til{K}$ contains a fixed point of $\til{g}$. Suppose for a contradiction that $\til{K}\cap \fix(\til{g})=\emptyset$. 
Then the connected components of $\til{\T}^2\sm \til{K}$ (which are open topological disks) form an open covering of the compact set $\fix(\til{g})$, and so we can find finitely many such components $U_1,\dots, U_k$ such that $\fix(\til{g})\subset U_1\cup\cdots\cup U_k$. Moreover, we may chose them so that each $U_i$ intersects $\fix(\til{g})$,  and in particular each $U_i$ is $\til{g}$-invariant.


Note that $\til{g}$ is nonwandering due to Proposition \ref{pro:nw-lift}, and $\bd U_i\subset \til{K}$ has no fixed points of $\til{g}$. These two facts and a classic result of Cartwright and Littlewood allow to conclude that the extension of $\til{g}|_{U_i}$ to the prime ends compactification of $U_i$ has no fixed points in the circle of prime ends, and therefore there exists a closed topological disk $D_i\subset U_i$ such that $\fix(\til{g})\cap U_i\subset D_i$ and the fixed point index of $g$ in $D_i$ is $1$ (this argument is contained in Proposition 4.2 of \cite{koro}). Thus, the Lefschetz index of $\til{g}$ is exactly $k$. Since by our hypothesis $\hat{g}$ (and so $\til{g}$) has at least one fixed point, $k\geq 1$. But on the other hand the Lefschetz index of a homeomorphism of $\T^2$ homotopic to the identity must be $0$, so we have a contradiction.
\end{proof}

\section{Existence of an essential invariant annulus}\label{sec:th:annulus}

In this section we prove a result that, together with Proposition \ref{pro:main-A} completes the proof of Theorem \ref{th:main}. Note that it does not require the existence of an invariant probability measure of full support.


\begin{theorem}\label{th:annulus} Let $f\colon \T^2\to \T^2$ be a nonwandering homeomorphism homotopic to the identity and $\hat{f}$ a lift of $f$. Suppose that there exists $M>0$ such that 
$$\abs{\smash{P_1(\hat{f}^n(z)-z)}} \leq M \text{ for all } n\in \Z, \, z\in \R^2,$$
and $\rho(\hat{f})$ is not a single point. Then there exists an invariant vertical annulus.
\end{theorem}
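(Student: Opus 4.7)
The plan is to lift $f$ to the vertical cylinder $\til\A=\R^2/\langle T_2\rangle$, where the induced homeomorphism $\til f$ is nonwandering (Proposition \ref{pro:nw-lift}) and inherits the bounded horizontal displacement hypothesis. Since $\rho(\hat f)$ is a nondegenerate vertical interval, after replacing $f$ by a suitable iterate and adjusting the lift we may assume $\{0\}\times[-1,1]\subset\rho(\hat f)$ (an invariant vertical annulus for the iterate yields one for $f$ by taking components of the $f$-orbit of the annulus). By D\'avalos's extension of Theorem \ref{th:reali} to the nonwandering setting, we obtain points $z_\pm\in\R^2$ with $\hat f(z_\pm)=z_\pm\pm(0,1)$, so $\til z_\pm\in\til\A$ are fixed points of $\til f$ whose vertical rotation numbers relative to $\hat f$ are $+1$ and $-1$ respectively.

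I would then introduce the invariant essential ``right/left'' continua in the cylinder: $\til\omega^+$ is the union of the right-unbounded components of $\bigcap_{n\in\Z}\til f^n(\{P_1\geq 0\})$, and $\til\omega^-$ is the analogous set for $\{P_1\leq 0\}$. Bounded displacement forces $\til\omega^+\supset\{P_1\geq M\}$ and $\til\omega^+\subset\{P_1\geq 0\}$, with $\til\omega^+$ closed, $\til f$-invariant, and essential in $\til\A$; symmetrically for $\til\omega^-$. A suitable horizontal translation of the lift arranges $\til z_+\in\til\omega^+$ and $\til z_-\in\til\omega^-$. The crucial step is to prove $\til\omega^+\cap\til\omega^-=\emptyset$: if they met, $\til K:=\til\omega^+\cup\til\omega^-$ would be a connected $\til f$-invariant continuum spanning both ends of $\til\A$. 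Compactifying to $\til\A^*\cong S^2$ (by fixing the two ends) and working in a finite torus cover where $\til\pi(\til K)$ becomes fully essential, Theorem \ref{th:reali-full} would supply periodic points of $\til f$ inside $\til K$ realizing both rotation numbers $\pm 1$; choosing a component of $\til\A^*\setminus\til K$ as an $\til f^*$-invariant open topological disk in which $\til f^*$ is nonwandering (it has a fixed point by Brouwer since $\til f^*$ is nonwandering) and applying Corollary \ref{coro:kln} would force those two periodic points to share a common rotation number relative to $\hat f$, a contradiction.

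Given the disjointness, $\til V:=\til\A\setminus(\til\omega^+\cup\til\omega^-)$ is a nonempty open $\til f$-invariant subset of $\{|P_1|<M\}$, essential in $\til\A$. Its essential connected component $\Omega\subset\til V$ is the candidate lifted annulus; careful choice of the horizontal thresholds in the $\til\omega^\pm$ construction ensures $T_1(\Omega)\cap\Omega=\emptyset$, so $\til\pi(\Omega)$ is an invariant vertical annulus in $\T^2$. The main obstacle I foresee is rigorously verifying that $\Omega$ is a genuine topological annulus (rather than an essential open subsurface of higher complexity) and is $T_1$-disjoint from its translates; both rely on combining the uniformity of the displacement bound $M$ with a further application of Corollary \ref{coro:kln} to rule out invariant bounded ``islands'' inside $\til V$ that could obstruct the annular topology.
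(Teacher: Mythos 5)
Your overall strategy is closely parallel to the paper's: use bounded horizontal displacement to build invariant ``walls'' in the vertical cylinder $\til\A$, use realization of distinct rational rotation vectors, and invoke the prime‑end rigidity of Corollary~\ref{coro:kln} to rule out the bad configuration. However, several of your individual steps are not correct as written and one crucial gap is not closed.

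First, your claim that $\til f$ is nonwandering ``by Proposition~\ref{pro:nw-lift}'' is not justified: that proposition applies only to \emph{finite} covers, and $\til\pi\colon\til\A\to\T^2$ is infinite cyclic. The paper instead passes through a genuine finite cover $\til\T^2=(\R/m\Z)\times\T^1$ with $m>2M+1$, using the displacement bound to confine any would‑be wandering open set to a strip where $\sigma\colon\til\A\to\til\T^2$ is injective; this is exactly where the uniform bound $M$ is indispensable. Second, the reduction to an iterate cannot be undone the way you suggest: an $f^k$‑invariant vertical annulus gives a \emph{periodic} essential annulus for $f$, and ``taking components of the $f$-orbit'' does not produce an $f$-invariant one. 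The paper avoids the issue by never passing to an iterate (it works with periodic points of higher period), and separately proves (Lemma~\ref{lem:annulus}) that a periodic vertical essential annulus must in fact be invariant when $\rho(\hat f)\subset\{0\}\times\R$.

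Third, the disjointness argument for $\til\omega^+$ and $\til\omega^-$ is confused. The set $\til K=\til\omega^+\cup\til\omega^-$ is neither compact nor obviously connected, so calling it ``a connected $\til f$-invariant continuum'' is unwarranted, and Theorem~\ref{th:reali-full} is a statement about fully essential continua \emph{in $\T^2$}, not about subsets of a cylinder. The paper circumvents this by working in $\R^2$ with the unique left‑unbounded component $W$ of $\bigcup_n\hat f^n(H)$ and setting $K=\pi(\partial W)\subset\T^2$, which is automatically a compact invariant essential set; the dichotomy is then on whether $K$ is \emph{fully} essential. When it is, Theorem~\ref{th:reali-full} applies cleanly to $K$ to place periodic points with different rotation numbers on $\partial W$, and the cylinder argument with Corollary~\ref{coro:kln} gives the contradiction. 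Finally, the gap you yourself flag — that the essential component $\Omega$ of $\til V$ is actually an annulus and that its $T_1$-translates are disjoint — is real and is precisely what makes the remaining case nontrivial. The paper resolves it not by analyzing $\til V$ directly, but via Lemma~\ref{lem:annulus}: when $K$ is essential but not fully essential, one takes a neither essential nor fully essential complementary component of $K$ and fills it, which is guaranteed to be an open essential annulus; the rotation‑set condition then upgrades it from periodic to invariant. You would need a comparable ``filling'' lemma, or to rephrase your construction entirely in $\T^2$, to close your argument.
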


We begin with a simple lemma.

\begin{lemma}\label{lem:annulus} Suppose that a homeomorphism $f\colon \T^2\to \T^2$ homotopic to the identity is nonwandering and has an invariant compact set $K$ which is essential but not fully essential. Then $f$ has a periodic essential annulus $A$ which is either invariant or disjoint from its image. In addition, if $A$ is vertical and some lift $\hat{f}$ of $f$ is such that $\rho(\hat{f}) \subset \{0\}\times \R $, then $f(A)=A$.
\end{lemma}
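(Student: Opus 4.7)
The plan is to extract a periodic essential annulus from the topology of $\T^2\setminus K$, and then to use a cyclic-ordering argument on the universal cover to show that, in the vertical case, the rotation-set hypothesis forces the period to be one.

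\emph{Step 1: Essential components share a common direction.} Since $K$ is essential but not fully essential, $\T^2\setminus K$ is itself essential, so some connected component $U$ of $\T^2\setminus K$ is essential. A standard intersection-number argument shows that two disjoint open subsets of $\T^2$ cannot both be essential with non-parallel homological directions: essential loops with non-parallel primitive homology classes have non-zero algebraic intersection, which would contradict the possibility of choosing disjoint representatives. Hence all essential connected components of $\T^2\setminus K$ share a single primitive homological direction $v\in\Z^2_*$.

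\emph{Step 2: Some essential component is $f$-periodic.} Being homotopic to the identity, $f$ preserves homological directions, and since $f(K)=K$ it permutes the essential components of $\T^2\setminus K$. Lifting to the cyclic cover $\A_v=\R^2/\langle v\rangle$, each essential component corresponds to a connected end-separating open set in $\A_v$; these lifts are linearly ordered in $\A_v$, and passing to the $T_1$-quotient yields a cyclic order on the set of essential components preserved by $f$. A topological finiteness argument (only finitely many end-separating components have horizontal width above a given threshold in a fundamental domain for $T_1$) produces a periodic essential component $U$; its iterates $U, f(U), \ldots, f^{q-1}(U)$ are then pairwise disjoint essential components.

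\emph{Step 3: Filling $U$ to an open annulus.} Let $A$ be the union of $U$ with all inessential connected components of $\T^2\setminus U$. The essential components of $\T^2\setminus U$ are finitely many closed sets, so $A$ is open; topologically $A$ is an open annulus of homological direction $v$, since the filling closes off all inessential topology while preserving the essential $v$-loop that made $U$ essential. The recipe uses only $f$-invariant features (the essentiality of complementary components), so applied to $f^i(U)$ it yields $f^i(A)$. Hence $A$ has the same period $q$ as $U$ and its iterates are pairwise disjoint, giving either $f(A)=A$ or $A\cap f(A)=\emptyset$.

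\emph{Step 4: The vertical case.} Suppose now $v=(0,1)$ and $\rho(\hat f)\subset\{0\}\times\R$; we show $q=1$. Let $B=\bigcup_{i=0}^{q-1}f^i(A)$ and $\hat B=\pi^{-1}(B)$. Each connected component of $\hat B$ is $T_2$-invariant (since each $f^i(A)$ is vertical) and horizontally bounded: each $f^i(A)$ has at least one essential vertical complementary component, whose lifts are $T_1$-periodic in $\A_v$ and bound the lift of $f^i(A)$ on both sides. These open strips are pairwise disjoint and hence totally ordered by horizontal position; label them $(S_n)_{n\in\Z}$ so that $T_1$ acts as $S_n\mapsto S_{n+q}$. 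The lift $\hat f$ is orientation-preserving, commutes with $T_1$, and since it must send each gap between adjacent strips to a gap between adjacent strips, it acts on $\{S_n\}_{n\in\Z}$ as a monotone bijection; commutation with $n\mapsto n+q$ rules out order-reversal, so the action is of the form $S_n\mapsto S_{n+d}$ for some $d\in\Z$. For $z\in S_0$ the center of $S_{nd}$ is at horizontal position $nd/q+O(1)$, whence
\[
\lim_{n\to\infty}\frac{P_1(\hat f^n(z)-z)}{n}=\frac{d}{q}.
\]
This limit must belong to $P_1(\rho(\hat f))=\{0\}$, forcing $d=0$. But $d=0$ means $\hat f(S_n)=S_n$ for every $n$, and projecting gives $f^i(A)=A$ for every $i$, i.e.\ $q=1$.

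\emph{Main obstacle.} The genuinely technical step is Step 3: verifying that the ``essential filling'' of $U$ is in fact an open annulus of direction $v$ and that this construction is $f$-equivariantly assigned to $U$ requires a careful use of the classification of essential open subsurfaces of $\T^2$ whose image in $\pi_1(\T^2)$ is cyclic.
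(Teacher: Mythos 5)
There is a genuine gap in Step 2: the finiteness argument does not produce a periodic component, and you never invoke the nonwandering hypothesis anywhere in your proof — which should be a warning sign, since nonwandering is necessary. Concretely, the claim that ``only finitely many end-separating components have horizontal width above a given threshold'' is of no use because $f$ does not preserve horizontal width; even if the open set $U$ you start with is ``fat,'' its iterates $f^n(U)$ can have widths shrinking to zero, so you cannot conclude that the $f$-orbit of $U$ returns to the finite ``fat'' family. Without nonwandering, the conclusion is simply false: take $K$ to be a compact union of vertical circles accumulating on one of them, so that the components of $\T^2\setminus K$ are countably many vertical annuli, and take $f$ to shift each annulus to the next while fixing $K$. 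Then $K$ is essential and not fully essential, $f$ is homotopic to the identity, but no complementary component is periodic and $f$ admits no periodic essential annulus. The paper's argument here is both shorter and uses the hypothesis: since $f$ permutes the connected components of $\T^2\setminus K$ and is nonwandering, some iterate of the open set $A_0$ must intersect $A_0$, and since that iterate is again a connected component of $\T^2\setminus K$, it must \emph{equal} $A_0$. That is the whole of Step 2, and it is where nonwandering is used.

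Once Step 2 is repaired, Steps 3 and 4 are essentially sound and your Step 4 is a legitimate alternative to the paper's. The paper argues by contradiction: from $P_1(\rho(\hat f))=\{0\}$ and horizontal boundedness of $\hat A$ it deduces $\hat f^n(\hat A)=\hat A$, and then shows that $\hat f(\hat A)$ lies entirely in one of the two $T_2$-invariant complementary half-planes $W_\pm$ of $\hat A$, whence $\hat f(W_\pm)\subset W_\pm$ and $\hat f^n(\hat A)\cap \hat A=\emptyset$, a contradiction. You instead linearly order all lifted strips of the orbit of $A$, observe that $\hat f$ acts on the index set as $n\mapsto n+d$, and compute the horizontal rotation number $d/q$ directly, which must vanish by hypothesis; $d=0$ then forces $q=1$. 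Both are correct; the paper's is a pure separation argument while yours is a rotation-number computation, and each requires the same preparatory fact (that $\pi^{-1}(A)$ consists of $T_2$-invariant, horizontally bounded strips permuted by $T_1$), which you should justify a bit more carefully than the phrase ``$T_1$-periodic in $\A_v$'' suggests.
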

\begin{proof} The hypothesis implies that there exists a connected component $A_0$ of $\T^2\sm K$ which is neither essential nor fully essential. Since the connected components of $\T^2\sm K$ are permuted, $A_0$ is $f^n$-invariant for some $n\in \N$. Moreover, either $n=1$ or $f(A_0)\cap A_0=\emptyset$. Let $A$ be the ``filling'' of $A$, \ie the union of $A_0$ with all inessential connected components of $\T^2\sm A_0$. It is easy to verify that $A$ is homeomorphic to an essential open topological annulus. Moreover, it is still true that $f^n(A)=A$ and either $n=1$ or $f(A)\cap A\neq \emptyset$, so the first claim follows.

To prove the second claim suppose for a contradiction that $n>1$, so that $A$ is disjoint from its image. If $\hat{A}$ is a connected component of $\pi^{-1}(A)$, then $\hat{A} = \hat{A}+(0,1)$ and $\hat{A}$ is disjoint form $\hat{A} + (k,0)$ for any $k\in \Z_*$.  Moreover, there is $k\in \Z$ such that $\hat{f}^n(\hat{A}) = \hat{A}+(k,0)$. Since $P_1(\rho(\hat{f}))=0$ and $P_1(\hat{A})$ is bounded, one easily sees that $k=0$; thus $\hat{f}^n(\hat{A})=\hat{A}$.

On the other hand, since $f(A)\cap A=\emptyset$, one also has $\hat{f}(\hat{A})\cap \hat{A}=\emptyset$. Note that exactly two connected components of $\R^2\sm \hat{A}$ are invariant by the translation $T_2(x,y) = (x,y+1)$. One of these components is unbounded to the right, say $W_+$, and the remaining one, $W_-$, is unbounded to the left. Since $\hat{f}(\hat{A})$ is $T_2$-invariant, we have either $\hat{f}(\hat{A})\subset W_+$ or $\hat{f}(\hat{A})\subset W_-$. But the first case implies that $\hat{f}(W_+)\subset W_+$, while the second case implies that $f(W_-)\subset W_-$. In either case, one concludes $\hat{f}^n(\hat{A})\cap \hat{A}=\emptyset$, a contradiction.
\end{proof}

%
%
%

\begin{proof}[Proof of Theorem \ref{th:annulus}] The proof is somewhat similar to the proof Proposition \ref{pro:main-A}.

Let $H = \{(x,y):x<0\}$, and let $W$ be the unique connected component of $\bigcup_{n\in \Z} \hat{f}^n(H)$ which is unbounded to the left, so that $H\subset W\subset H + (m,0)$ for some $m\in \N$, due to our hypothesis.

Note that $W$ is $\hat{f}$-invariant and $W + (0,1) = W$. This implies that $K=\pi(\bd W)$ is compact and $f$-invariant. Clearly $K$ is essential, since it intersects any horizontal loop in $\T^2$ (indeed, if $\gamma$ is a horizontal loop in $\T^2$ then any connected component of $\pi^{-1}(\gamma)$ is a connected set intersecting both $W$ and its complement, and therefore intersecting $\bd W$).

Suppose that $K$ is not fully essential. Since $K$ is essential and invariant, and clearly under our assumption $P_1(\rho(\hat{f}))=\{0\}$, by Lemma \ref{lem:annulus} there is an invariant essential annulus $A$. Using the assumption that $\rho(\hat{f})$ is not a single point, so that $\rho(\hat{f})=\{0\}\times [a,b]$ for some $a<b$, one easily concludes that $A$ is a vertical invariant annulus, as sought.

It remains to consider the case where $K$ is fully essential. We will show that this case is not possible. To see this, first observe that from Theorem \ref{th:reali-full} and the fact that $\rho(\hat{f})$ necessarily contains different rational points, there are points $z_0, z_1 \in \R^2$ such that $\pi(z_i)\in K$, integers $p_0, p_1$ and positive integers $q_0,q_1$ such that $p_1/q_1\neq p_0/q_0$ and $\hat{f}^{q_i}(z_i) = z_i+p_i$.  We may assume that $z_i\in \bd W=\pi^{-1}(K)$ by using an appropriate integer translation of $z_i$. 

Let $\til{\A} = \R^2/\langle T_2\rangle \simeq \R\times \T^1$, where $T_2(x,y)=(x,y+1)$, and denote by $\tau\colon \R^2\to \til{\A}$ the projection. Let $\til{f}$ be the map induced by $\hat{f}$ on $\til{\A}$, and $\til{T}_1(x,y) = (x+1,y)$ the horizontal translation.

We claim that $\til{f}$ is nonwandering. To see this, suppose that $U$ is a wandering open set of $\til{f}$. By using an appropriate $\til{T}_1$-translation of $U$ and choosing a smaller set, we may assume that $U\subset [0,1]\times \T^1$. Choose an integer $m>2M+1$, and note that $\til{\T}^2 = (\R/m\Z) \times \T^1 \simeq \til{\A}/\langle \til{T}_1^m\rangle$ is a finite covering of $\T^2$, while it is covered by $\til{\A}$ after natural identifications. Let $\sigma\colon \til{\A}\to \til{\T}^2$ be the projection, and let $F$ denote the map of $\til{\T}^2$ lifted by $\til{f}$. Then $F$ is a lift of $f$ to the finite covering $\til{\T}^2$, and since $f$ is nonwandering, By Proposition \ref{pro:nw-lift} we know that $F$ is nonwandering. Note that, since $m>2M+1$, the projection $\sigma$ is injective on $[-M, M+1]\times \T^1$. But our hypothesis on the deviation of $\hat{f}$ implies that $\til{f}^n(U)\subset [-M,M+1]$ for all $n\in \Z$, and so the fact that $\sigma|_{[-M,M+1]}$ is injective implies that 
$$F^n(\sigma(U))\cap \sigma(U)=\sigma(\til{f}^n(U))\cap \sigma(U) = \sigma(\til{f}^n(U)\cap U)=\emptyset$$ for all $n\in \Z$. Hence $\sigma(U)$ is a nonempty wandering open set for $F$, a contradiction.
  
Finally, we observe that $\til{W} = \tau(W)$ is an open subset of $\til{\A}$ such that $(-\infty,0)\times \T^1\subset \til{W}$, and since $W$ is simply connected and $T_2$-invariant, $\til{W}$ is a topological annulus. Thus, as done in \S\ref{sec:cylinder}, if $\til{\A}_*=\til{\A}\sqcup\{L_\infty\}$ is the plane obtained by adding to $\til{\A}$ its end on the left side, $\til{W}_*=\til{W}\sqcup \{L_\infty\}$ is a topological disk which is invariant by the map induced by $\til{f}$ fixing $L_\infty$ (which is still nonwandering), and $\til{z}_i:=\tau(z_i)$ is a periodic point of $\til{f}$ in the boundary of $\til{W}_*$, for $i\in \{0,1\}$. But since the points $\til{z}_i$ have different rotation vectors associated to the lift $\hat{f}$, this contradicts Corollary \ref{coro:kln}, completing the proof.
\end{proof}

\section{Proofs of Theorems \ref{th:main} and \ref{th:main-gen}}

Theorem \ref{th:main} follows immediately from Proposition \ref{pro:main-A} and Theorem \ref{th:annulus}.

\subsection{Proof of Theorem \ref{th:main-gen}}

Let $f$ and $\hat{f}$ be as in the hypotheses of the theorem, and suppose that $\rho(\hat{f})$ is a segment of rational slope containing a point of rational coordinates. By the properties stated in \S\ref{sec:rotation} there is $n\in \N$ such that $\rho(\hat{f}^n)$ contains a point of $\Z^2$, and there is $A\in \GL(2,\Z)$ such that $\rho(A\hat{f}^nA^{-1}) = A\rho(\hat{f})$ is a vertical segment, which also contains a point $v\in \Z^2$. Letting $\hat{g}=A\hat{f}^nA^{-1} - v$, we see that $\hat{g}$ is a lift of $g=A_{\T^2}f^nA_{\T^2}^{-1}$ (where $A_{\T^2}$ is the map induced by $A$ on $\T^2$) and the hypotheses of Theorem \ref{th:main} hold for $g$. Thus there is an invariant vertical annulus $U$ for $g$, which means that $A_{\T^2}U$ is a periodic essential annulus for $f$. 

The converse direction of the theorem is an easy consequence of the definition of rotation set. For completeness, suppose that $A$ is a periodic essential annulus and let $k\in \N$ be such that $f^k(A)=A$. By items (2) and (3) of \cite[Proposition 1.4]{kt-ess}, there is $v\in \Z^2_*$ and some lift $\hat{g}$ of $f^k$ to $\R^2$ such that $\rho(\hat{g})\subset \R v$ (in fact $v$ is the homological direction of $A$). Since $\hat{f}^k$ is another lift of $f^k$, one has that $\hat{g}=\hat{f}^k+w$ for some $w\in \Z^2$, and by the properties from \S\ref{sec:rotation} one has 
$$k\rho(\hat{f}) + w = \rho(\hat{f}^k+w) = \rho(\hat{g}) \subset \R v,$$
which implies that $\rho(\hat{f})\subset \R v - w/k$. Since $\rho(\hat{f})$ is assumed to have more than one point, it must be a segment of rational slope parallel to $v$. Since $\R v - w/k$ contains $w/k\in \Q^2$, it follows that $\R v \cap \Q^2$ is dense in $\R v$, and in particular $\rho(\hat{f})\cap \Q^2\neq \emptyset$.
\qed

\bibliographystyle{koro} 
\bibliography{intervalo}

\end{document}